\title{Discounted Penalty Function at Parisian Ruin for L\'evy Insurance Risk Process}
\author{R. Loeffen\footnote{ronnie.loeffen@manchester.ac.uk;
School of Mathematics, University of Manchester, United Kingdom}, \; Z. Palmowski\footnote{zbigniew.palmowski@gmail.com;
Faculty of Pure and Applied Mathematics,
Wroc\l aw University of Science and Technology,
Wyb. Wyspia\'nskiego 27, 50-370 Wroc\l aw, Poland, Corresponding author} \; and \; B.A. Surya\footnote{budhi.surya@msor.vuw.ac.nz; School of Mathematics and Statistics, Victoria University of Wellington, New Zealand}
}
\date{\today}
\newtheorem{theorem}{Theorem}[section]
\newtheorem{lemma}[theorem]{Lemma}
\newtheorem{cor}[theorem]{Corollary}
\newtheorem{corol}[theorem]{Corollary}
\newtheorem{prop}[theorem]{Proposition}
\newcommand{\exit}{{\mbox{\, \vspace{3mm}}} \hfill\mbox{$\square$}}
\numberwithin{equation}{section}
\begin{document}
\maketitle \pagestyle{myheadings} \markboth{R. Loeffen, Z. Palmowski and B.A. Surya} {Total Discounted Penalty Function at Parisian Ruin}
\begin{abstract}
In the setting of a L\'evy insurance risk process, we present some results regarding the Parisian ruin problem which concerns the occurrence of an excursion below zero of duration bigger than a given threshold $r$.  First, we give the joint Laplace transform of ruin-time and ruin-position (possibly killed at the first-passage time above a fixed level $b$), which generalises  known results concerning Parisian ruin. This identity can be used to compute the expected discounted penalty function via Laplace inversion.  Second, we obtain the $q$-potential measure of the process killed at Parisian ruin. The results have  semi-explicit expressions in terms of the $q$-scale function and the distribution of the L\'evy process.


\medskip

\textbf{Keywords}: L\'evy process; Parisian ruin; risk process; ruin; resolvent; first-passage time.\\
\textbf{JEL codes}: C65.

\end{abstract}

\section{Introduction}
Let $X=\{X_t:t\geq 0\}$ be a spectrally negative L\'evy process
defined on filtered probability space
$(\Omega,\mathcal{F},\{\mathcal{F}_t:t\geq 0\},\mathbb{P})$.  That
is to say that $X$ is a stochastic process starting from zero,
having stationary and independent increments with c\`adl\`ag sample
paths with no positive jumps.
To avoid degenerate cases we exclude
processes $X$ with monotone paths. As a strong Markov process,
we shall endow $X$ with probabilities
$\{\mathbb{P}_x,x\in\mathbb{R}\}$, such that
$\mathbb{P}_x(X_0=x)=1$. Furthermore, we denote by $\mathbb{E}_x$
expectation with respect to $\mathbb{P}_x$. We will use convention that
$\mathbb{P}=\mathbb{P}_0$ and $\mathbb{E}=\mathbb{E}_0$.

Our main object of interest in this paper is so-called Parisian ruin time $\tau_r$
representing the first time that the process $X$ has spent $r>0$
units of time consecutively below zero before getting back up to
zero again. Formally, this stopping time is defined by
\begin{equation}\label{eq:ruintime}
\tau_r:=\inf\{t>r:(t-g_t)\geq r\} \; \; \textrm{with} \;\;
g_t:=\sup\{0\leq s \leq t: X_s\geq 0 \},
\end{equation}
under $\mathbb{P}_x$, with the convention that $\inf
\emptyset=\infty$ and $\sup \emptyset =0$.
Note that $\tau_0=\tau_0^-$, where
\begin{equation*}
\tau_0^-:=\inf\{t\geq 0: X_t<0\}
\end{equation*}
represents the classical ruin time.

The stopping time
$\tau_r$ defined in (\ref{eq:ruintime}) was first introduced by Chesney et al.
\cite{Chesney} in the context of pricing barrier options in
mathematical finance. It was later introduced in actuarial risk
theory by Dassios and Wu \cite{Dassios2009a} and they provided an expression for the Parisian ruin
probability $\mathbb{P}_x(\tau_r<\infty)$ when the underlying process is a linear Brownian motion. Czarna and Palmowski
\cite{Czarna} and Loeffen et al. \cite{Loeffen} extended the result
to a general spectrally negative L\'evy process.
Dassios and Wu \cite{Dassios2009b} provided the Laplace transform of the Parisian ruin time $\mathbb{E}_x \left[ \mathrm e^{-q\tau_r } \mathbf{1}_{\{\tau_r<\infty\}} \right]$ for the classical risk process with exponentially distributed claim sizes and the case of linear Brownian motion.
For a spectrally negative L\'evy processes of bounded variation  Landriault et al. \cite{Landriault} also considered the Laplace transform of the Parisian ruin time (possibly killed when the process goes above a given level) but in the setting where the delay is not a fixed number but is random with a mixed Erlang distribution which is resampled every time a new excursion below 0 starts.
In the same setting but with exponentially distributed delays, Baurdoux et al. \cite{Baurdoux} derived an expression for the (discounted) distribution of the process $X$ at the time of Parisian ruin possibly killed when exiting a given interval. Note that this particular definition of Parisian ruin is equivalent (i.e. the two ruin times have the same distribution) to so-called Poisson ruin which corresponds to $X$ dropping below zero in the setting where the L\'evy process is only monitored at the jump times of an independent Poisson process, see e.g. \cite{AIZ}.
For a refracted L\'evy process and with a fixed delay for the Parisian ruin time, Lkabous et al. \cite{CzarnaRenaud} determined the Laplace transform of the Parisian ruin time $\tau_r$ possibly killed when the process goes above a given level.

In this paper we build further upon
the previous works in the following directions. Firstly, we extend the works of \cite{Czarna, Dassios2009a, Dassios2009b, Loeffen} by giving
the joint Laplace transform of the Parisian ruin time $\tau_r$ and the level of the process at $\tau_r$ for a general spectrally negative L\'evy process. Secondly, using the first result we identify the $q$-potential measure applied to exponential functions of the spectrally negative L\'evy process killed at the Parisian ruin time.
The two results can be used to compute, via Laplace inversion, the following object:
\begin{equation}\label{eq:value}
V_r^{(q)}(x,b):=\mathbb{E}_x \Big[ \int_0^{\tau_r\wedge \tau_b^+}\mathrm e^{-qt} g(X_t)dt \Big]
+ \mathbb{E}_x\big[\mathrm e^{-q(\tau_r\wedge \tau_b^+)}f(X_{\tau_r\wedge \tau_b^+}) \big],
\end{equation}
where $g$ and $f$ are payoff functions and where
\[\tau_b^+:=\inf\{t\geq 0: X_t> b\}\]
with $b\in[0,\infty]$ and $x\leq b$. In the actuarial risk theory literature the last term on the right hand side is referred to as an expected discounted penalty function.
The expression in (\ref{eq:value}) is widely used in financial modelling,
in particular in the field of optimal capital structure under bankruptcy preceding and reorganization of a firm.  Following Broadie et al. \cite{Broadie} and Francois and Morellec \cite{Francois}, the first term in (\ref{eq:value}) may be interpreted as the total discounted payoff received prior to ruin of a financial firm payable as long as the firm's asset process stays above a certain pre-determined level, whereas the second term is the cost at ruin. We refer to the expression on page 393 in \cite{Francois}. According to \cite{Broadie} and \cite{Francois}, the stopping time $\tau_r$ is called the {liquidation time}. We refer
to \cite{Broadie} and \cite{Francois} and the literature therein for further details.

Apart of providing identities that allow one to compute the above quantity \eqref{eq:value}, another contribution of this paper lies in presenting some new methodology of calculating Parisian-type quantities. In particular for the key lemma (Lemma \ref{lem_key} below) we apply the method for dealing with overshoots of spectrally negative L\'evy processes presented in Loeffen \cite{ronnie} and the Kolmogorov forward equations, which avoids the need for taking Laplace transforms with respect to the delay $r$ and then inverting back later. Besides being able to compute more general quantities, this more direct approach also provides more transparent proofs of known results like the Parisian ruin probability obtained in \cite{Loeffen} in which the technique of taking Laplace transforms with respect to $r$ was heavily used.


The paper is organized as follows. In Section \ref{sec:prel} we recall some well-known results on spectrally negative L\'evy processes. Then the main results are presented in Section \ref{sec:mainresults}, whereas Section \ref{sec:proofs} contains the proofs.

\section{Preliminaries}\label{sec:prel}

For the spectrally negative L\'evy process $X$, there exists $\mu\in\mathbb{R}$, $\sigma\geq0$  and a measure $\Pi$ satisfying $\int_{-\infty}^0 (1\wedge \theta^2)\Pi(\mathrm{d}\theta)<\infty$ such that the Laplace transform of $X_t$  is given by, for any $x\in\mathbb R$ and $\lambda,t\geq 0$,
\begin{equation}\label{lapl_X}
\mathbb E_x \left[ \mathrm e^{\lambda X_t} \right] = \mathrm e^{\lambda x + \psi(\lambda)t},
\end{equation}
where
\begin{equation}\label{eq:exponent}
\psi(\lambda) 
=\mu\lambda + \frac{1}{2}\sigma^{2}\lambda^{2}  + \int_{(-\infty,0)} \left( \mathrm e^{\lambda\theta}-1-\lambda \theta\mathbf{1}_{\{\theta>-1\}} \right) \Pi(\mathrm{d}\theta).
\end{equation}
%
%
It is easily shown that $\psi$ is zero at the origin, tends to infinity at infinity and is strictly
convex. We denote by $\Phi:[0,\infty)\rightarrow [0,\infty)$ the
right continuous inverse of $\psi$ so that it satisfies the following:
\begin{equation*}
\Phi(q)=\sup\{\lambda\geq 0:\psi(\lambda)=q\}. 
\end{equation*}
Note that due to the convexity of $\psi$, there exit at most two
roots for a given $q$ and precisely one root when $q>0$.
Our main results are expressed in terms of the $q-$scale
function $W^{(q)}(x)$ of $X$, which satisfies $W^{(q)}(x)=0$ for $x<0$ and on $[0,\infty)$, $W^{(q)}(x)$ is the (unique) continuous function with Laplace transform,
\begin{equation}\label{eq:scale}
\int_0^{\infty} \mathrm e^{-\lambda x} W^{(q)}(x)\mathrm{d}x =
\frac{1}{\psi(\lambda)-q},  \quad   \lambda > \Phi(q).
\end{equation}
Following (\ref{eq:scale}), it is
straightforward to check that for all $x\in\mathbb R$ and $q\geq 0$,
\begin{equation}\label{eq:scale2}
W^{(q)}(x) =\mathrm e^{\Phi(q) x} W^{(0)}_{\Phi(q)}(x),
\end{equation}
where $W^{(0)}_{\Phi(q)}(x)$ is the $0$-scale function of the spectrally negative L\'evy process with Laplace exponent $\lambda\mapsto\psi(\lambda+\Phi(q))-q$.
We remark that the scale function is a strictly increasing function, that $W^{(q)}(0)>0$ if  $\sigma=0$ and $\int_{-1}^0\theta\Pi(\mathrm d\theta)<\infty$ and otherwise $W^{(q)}(0)=0$. Further, when $q>0$, we have $\lim_{x\to\infty}W_{\Phi(q)}(x)=\frac1{\psi'(\Phi(q))}<\infty$.



For further details on spectrally negative L\'evy process, we refer
to Chapter VI of Bertoin \cite{Bertoin} and Chapter 8 of Kyprianou
\cite{Kyprianou}. Some examples of L\'evy processes for $W^{(q)}(x)$
are available in explicit form can be found in Kuznetzov et al.
\cite{Kuznetzov}. In any case, it can be computed by numerically
inverting (\ref{eq:scale}), see Surya \cite{Surya}.


\section{Main results}\label{sec:mainresults}

In the main results the function $\Lambda^{(q)}(x,r)$ defined by
\begin{equation}\label{eq:Lambdaq}
\Lambda^{(q)}(x,r)=\int_0^{\infty}W^{(q)}(x+z)\frac{z}{r}\mathbb{P}(X_r\in \mathrm{d}z)
\end{equation}
will frequently appear. By Kendall's identity $\mathbb P(\tau_z^+\in\mathrm d r)\mathrm{d}z =   \frac{z}{r}\mathbb{P} (X_r\in\mathrm{d}z)\mathrm d r$,  $z,r\geq 0$, (cf. Corollary VII.3 in \cite{Bertoin}) and the fact $\mathbb E \left[ \mathrm e^{-\lambda \tau_z^+} \right] = \mathrm e^{-\Phi(\lambda)z}$, $\lambda,z\geq 0$, (cf. Section 8.1 in \cite{Kyprianou}) it follows that the Laplace transform of $r\mapsto\mathrm e^{-q r}\Lambda^{(q)}(x,r)$ is given by
\begin{equation*}
\int_0^\infty  \mathrm e^{-\theta r} \left( \mathrm e^{-q r} \Lambda^{(q)}(x,r) \right) \mathrm d r =  \int_0^\infty  \mathrm e^{-\Phi(\theta+q) z} W^{(q)}(x+z)\mathrm d z, \quad \theta>0, x\in\mathbb R.
\end{equation*}
It is interesting to note that in a similar role as $ \Lambda^{(q)}(x,r)$ will appear in our identities, the right hand side of the above equation appears in identities involving Parisian ruin with exponentially distributed delays with parameter $\theta$, or equivalently, ruin when the L\'evy process $X$ is observed only at the jump times of an independent Poisson process with rate $\theta$, see e.g. Proposition 2.1 in \cite{Landriault}, the function $\mathcal H^{(q+\theta,-\theta)}(x)$ appearing in \cite{Baurdoux} and the function $Z_q(x,\Phi(\theta+q))$ appearing in \cite{AIZ}. This connection is somewhat surprising since although taking Laplace transforms in $r$ is equivalent to considering an exponentially distributed delay (sampled once (and independently)), this is different from the setup in the papers \cite{Baurdoux} and \cite{Landriault} where the exponentially distributed delay is resampled for each excursion below zero.

The first main results concerns the joint Laplace transform of the Parisian ruin time and overshoot killed at the first-passage
time above a fixed level $b$.

\begin{theorem}\label{thm_lapl}
For $q,\lambda\geq 0$, $r,b>0$ and $x\leq b$,
\begin{equation*}
\begin{split}
  \mathbb{E}_x & \left[ \mathrm{e}^{-q ( \tau_r - r )} \mathrm e^{ \lambda X_{\tau_r} -\psi(\lambda)r } \mathbf{1}_{\{\tau_r<\tau_b^+\}} \right] \\
= &   \mathrm e^{\lambda x } -  (\psi(\lambda)-q)  \left[ \int_0^x W^{(q)}(x-z)  \mathrm e^{\lambda z}  \mathrm d z
 + \int_0^r   \mathrm e^{-\psi(\lambda) s } \Lambda^{(q)}(x,s)   \mathrm{d}s \right] \\
& - \frac{ \Lambda^{(q)}(x,r)   }{ \Lambda^{(q)}(b,r)   } \left(  \mathrm e^{\lambda b }  -  (\psi(\lambda)-q) \left[  \int_0^b W^{(q)}(b-z)  \mathrm e^{\lambda z}  \mathrm d z + \int_0^r   \mathrm e^{-\psi(\lambda) s } \Lambda^{(q)}(b,s)   \mathrm{d}s  \right] \right).
\end{split}
\end{equation*}
\end{theorem}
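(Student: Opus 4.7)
The plan is to combine a key technical lemma (Lemma \ref{lem_key}) with the exponential martingale of $X$. Writing $\mathcal{A}(x)$ for the LHS of the theorem and using the elementary identity $\mathrm e^{-q(\tau_r - r) - \psi(\lambda) r} = \mathrm e^{(q - \psi(\lambda)) r} \mathrm e^{-q\tau_r}$, we may write $\mathcal{A}(x) = \mathrm e^{(q-\psi(\lambda))r}\,\mathcal{E}(x)$, where $\mathcal{E}(x) := \mathbb{E}_x[\mathrm e^{-q\tau_r + \lambda X_{\tau_r}}\mathbf{1}_{\tau_r<\tau_b^+}]$. So it suffices to compute $\mathcal{E}(x)$ and then multiply by the scalar factor $\mathrm e^{(q-\psi(\lambda))r}$.

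To compute $\mathcal{E}(x)$, I would first apply the compensator (or It\^o) formula to $\mathrm e^{\lambda X_t - qt}$, giving
$$\mathrm e^{\lambda X_t - qt} - \mathrm e^{\lambda x} = (\psi(\lambda) - q)\int_0^t \mathrm e^{\lambda X_s - qs}\,\mathrm d s + L_t,$$
for some local martingale $L$. Localising and applying optional stopping at $T := \tau_r \wedge \tau_b^+$ (uniform integrability requires the usual care, particularly when $\lambda > \Phi(q)$), together with the spectral negativity identity $X_{\tau_b^+}=b$ on $\{\tau_b^+<\infty\}$, yields
$$\mathcal{E}(x) + \mathrm e^{\lambda b}\,\mathbb{E}_x\bigl[\mathrm e^{-q\tau_b^+}\mathbf{1}_{\tau_b^+\leq\tau_r}\bigr] = \mathrm e^{\lambda x} + (\psi(\lambda)-q)\,\mathbb{E}_x\!\left[\int_0^{T}\mathrm e^{\lambda X_s - qs}\,\mathrm d s\right].$$

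Next I would invoke the key lemma to identify the two Parisian-killed functionals on the right-hand side: the Laplace transform of $\tau_b^+$ killed at Parisian ruin, which I expect to equal $\Lambda^{(q)}(x,r)/\Lambda^{(q)}(b,r)$ (consistent with specialising the theorem to $q=\psi(\lambda)$, where only this term survives); and the $q$-resolvent of $X$ killed at $T$ applied to $z\mapsto \mathrm e^{\lambda z}$, expressible through $W^{(q)}$ and $\Lambda^{(q)}$. Substituting these into the displayed identity, solving for $\mathcal{E}(x)$, and multiplying by $\mathrm e^{(q-\psi(\lambda))r}$, I would use the scalar identity $\mathrm e^{(q-\psi(\lambda))r} - 1 = (q-\psi(\lambda))\int_0^r \mathrm e^{(q-\psi(\lambda))s}\,\mathrm d s$ to repackage the external prefactor into the internal integrals $\int_0^r \mathrm e^{-\psi(\lambda)s}\,\Lambda^{(q)}(\cdot,s)\,\mathrm d s$ that appear in the theorem's RHS.

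The main obstacle is the key lemma itself, particularly the evaluation of the exponential-weighted resolvent killed at Parisian ruin, which requires simultaneously tracking the overshoot of the L\'evy process at the end of the terminal subzero excursion and the Parisian delay structure. As advertised in the introduction, the plan is to handle this via Loeffen's \cite{ronnie} overshoot method combined with the Kolmogorov forward equation, thereby avoiding the more traditional route of Laplace-transforming in $r$ and inverting. Once the key lemma is in hand, the deduction of Theorem \ref{thm_lapl} reduces to the optional-stopping identity above and the algebraic manipulation outlined.
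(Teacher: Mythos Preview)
Your optional-stopping identity is exactly the relation the paper writes down in \eqref{resol_step1}--\eqref{resol_step2}, but the paper uses it in the \emph{opposite} direction: it first proves Theorem~\ref{thm_lapl} and Corollary~\ref{LTpartwosided}, and then plugs these into the optional-stopping identity to obtain the Parisian-killed resolvent (Theorem~\ref{theo:main2}/\ref{theo:general2}). Your plan requires the resolvent $\mathbb{E}_x\bigl[\int_0^{\tau_r\wedge\tau_b^+}\mathrm e^{-qs+\lambda X_s}\,\mathrm d s\bigr]$ as an input, but you have given no independent route to it, so the argument is circular as stated.

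The specific gap is a misreading of what Lemma~\ref{lem_key} delivers. It does \emph{not} evaluate the Parisian-killed resolvent, nor the two-sided Parisian exit probability $\Lambda^{(q)}(x,r)/\Lambda^{(q)}(b,r)$. It computes the single-excursion quantities $\mathbb{E}_x[\widetilde f(X_r)\mathbf 1_{\{r<\tau_\varepsilon^+\}}]$ and $\mathbb{E}_x[\mathrm e^{-q\tau_0^-}\mathbf 1_{\{\tau_0^-<\tau_b^+\}}\mathbb{E}_{X_{\tau_0^-}}[\widetilde f(X_r)\mathbf 1_{\{r<\tau_\varepsilon^+\}}]]$, i.e.\ what happens during one subzero excursion of length exceeding $r$. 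To pass from these to anything involving $\tau_r$ one still needs the recursive $\varepsilon$-approximation argument of the proof of Theorem~\ref{thm_mainresult} (splitting on $\{\tau_\varepsilon^+\leq r\}$ versus $\{\tau_\varepsilon^+> r\}$, solving for the quantity at $x=\varepsilon$, and sending $\varepsilon\downarrow 0$). That argument already yields $\mathbb{E}_x[\mathrm e^{-q\tau_r}f(X_{\tau_r})\mathbf 1_{\{\tau_r<\tau_b^+\}}]$ directly for $f(x)=\mathrm e^{\lambda x}$, which is Theorem~\ref{thm_lapl}; detouring through the resolvent buys nothing and in fact presupposes the result you are trying to prove.
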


For the special case where $\lambda=0$, Lkabous et al. \cite[Thm. 4(i)]{CzarnaRenaud} (set $\delta=0$ there) provide a similar expression for the left hand side of the above identity. Their expression is slightly different than ours, which is because in the proof they use the first identity in Lemma \ref{lem_key} below (in the special case where $f\equiv 1$ and $\widetilde f\equiv 1$) with $p=0$ whereas we choose $p=q$, which leads to nicer expressions in the case where $\lambda>0$.


By setting $q=\lambda=0$ in Theorem \ref{thm_lapl} and performing an Esscher change of measure, one can get the next corollary. It is a special case of \cite[Thm. 4(iii)]{CzarnaRenaud} (again by taking $\delta=0$ there) and we refer to \cite{CzarnaRenaud} for further details of the proof.
\begin{cor}\label{LTpartwosided}
For $q\geq 0$, $r,b>0$ and $x\leq b$,
\begin{equation*}
\mathbb{E}_x \left[ \mathrm e^{-q\tau_b^+}\mathbf{1}_{\{\tau_b^+<\tau_r \}} \right] =\frac{\Lambda^{(q)}(x,r)}{\Lambda^{(q)}(b,r)}.
\end{equation*}
\end{cor}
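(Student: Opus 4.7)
The plan is to follow the two-step recipe suggested in the text: first specialise Theorem \ref{thm_lapl} to the probabilistic ($q=\lambda=0$) version of the two-sided event, then transfer the discounting back in by an Esscher change of measure with parameter $\Phi(q)$.

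Step 1. Set $q=\lambda=0$ in Theorem \ref{thm_lapl}. Since $\psi(0)=0$, the bracket factors multiplied by $\psi(\lambda)-q$ vanish and the identity collapses to
\begin{equation*}
\mathbb P_x(\tau_r<\tau_b^+) \;=\; 1 \;-\; \frac{\Lambda^{(0)}(x,r)}{\Lambda^{(0)}(b,r)}.
\end{equation*}
Since under $\mathbb P_x$ the events $\{\tau_r<\tau_b^+\}$ and $\{\tau_b^+<\tau_r\}$ partition $\Omega$ up to a $\mathbb P_x$-null set (either the process exits $(-\infty,b]$ upwards first or makes an excursion below zero of length at least $r$ first, and the third possibility $\tau_r=\tau_b^+=\infty$ has probability zero for a non-monotone spectrally negative L\'evy process), taking complements gives the corollary when $q=0$.

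Step 2. For general $q\geq 0$ I would apply the Esscher transform with parameter $\Phi(q)$, i.e.\ consider the measure $\mathbb P^{\Phi(q)}_x$ defined by $\mathrm d\mathbb P^{\Phi(q)}_x/\mathrm d\mathbb P_x\big|_{\mathcal F_t}=\mathrm e^{\Phi(q)(X_t-x)-qt}$. Under this measure, $X$ is a spectrally negative L\'evy process with Laplace exponent $\lambda\mapsto\psi(\lambda+\Phi(q))-q$ whose $0$-scale function is $W^{(0)}_{\Phi(q)}$, and its law of $X_r$ satisfies $\mathbb P^{\Phi(q)}(X_r\in\mathrm d z)=\mathrm e^{\Phi(q)z-qr}\mathbb P(X_r\in\mathrm d z)$. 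Applying the $q=0$ identity from Step 1 under $\mathbb P^{\Phi(q)}_x$ and using that, by spectral negativity, $X_{\tau_b^+}=b$ on $\{\tau_b^+<\infty\}$, I get
\begin{equation*}
\mathrm e^{\Phi(q)(b-x)}\mathbb E_x\!\left[\mathrm e^{-q\tau_b^+}\mathbf 1_{\{\tau_b^+<\tau_r\}}\right]
\;=\;\mathbb P^{\Phi(q)}_x(\tau_b^+<\tau_r)
\;=\;\frac{\Lambda^{(0)}_{\Phi(q)}(x,r)}{\Lambda^{(0)}_{\Phi(q)}(b,r)}.
\end{equation*}

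Step 3. Finally I would rewrite $\Lambda^{(0)}_{\Phi(q)}$ in terms of $\Lambda^{(q)}$. Substituting $W^{(0)}_{\Phi(q)}(x+z)=\mathrm e^{-\Phi(q)(x+z)}W^{(q)}(x+z)$ from \eqref{eq:scale2} and the Esscher expression for the law of $X_r$ into the definition \eqref{eq:Lambdaq} of $\Lambda$ gives, after the factors $\mathrm e^{\pm\Phi(q)z}$ cancel,
\begin{equation*}
\Lambda^{(0)}_{\Phi(q)}(x,r)\;=\;\mathrm e^{-\Phi(q)x-qr}\,\Lambda^{(q)}(x,r).
\end{equation*}
Taking the ratio at $x$ and at $b$ and combining with Step 2, the factor $\mathrm e^{\Phi(q)(b-x)}$ cancels and the claimed identity follows. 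The only genuinely delicate point is the bookkeeping of the exponential factors in the Esscher transform of $W$ and of the law of $X_r$; once one trusts that these enter in the required compensating way, the rest is just algebra and the identification of $X_{\tau_b^+}=b$.
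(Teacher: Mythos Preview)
Your proposal is correct and follows precisely the route the paper sketches: specialise Theorem \ref{thm_lapl} to $q=\lambda=0$ and then recover general $q$ via an Esscher transform at $\Phi(q)$, exploiting $X_{\tau_b^+}=b$ and the identity $W^{(0)}_{\Phi(q)}(\cdot)=\mathrm e^{-\Phi(q)\cdot}W^{(q)}(\cdot)$ so that the exponential factors in $\Lambda^{(0)}_{\Phi(q)}$ cancel exactly against the change-of-measure density. The only point worth tightening is the claim $\mathbb P_x(\tau_r=\tau_b^+=\infty)=0$ in Step~1: when $\psi'(0+)<0$ one should note that $X_t\to-\infty$ a.s.\ forces $\tau_r<\infty$, and for $q>0$ this issue disappears anyway since under $\mathbb P^{\Phi(q)}$ the process drifts to $+\infty$.
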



Next we give an expression for the Laplace transform of the $q$-potential measure killed at Parisian ruin or when $X$ goes above $b$, whatever comes first.

\begin{theorem}\label{theo:main2}
For $q,\lambda\geq 0$, $r,b>0$ and $x\leq b$,
\begin{equation*}
\begin{split}
& \mathbb{E}_x  \left[\int_0^{\tau_r\wedge \tau_b^+} \mathrm e^{-q(t-r)} \mathrm e^{\lambda X_t -\psi(\lambda)r} \mathrm{d}t \right] \\
= &   \frac{ \mathrm e^{\lambda x} \left( 1 - \mathrm e^{-(\psi(\lambda)-q) r} \right)  }{ \psi(\lambda)-q} -     \int_0^x W^{(q)}(x-z)  \mathrm e^{\lambda z}  \mathrm d z   - \int_0^r   \mathrm e^{-\psi(\lambda)s } \Lambda^{(q)}(x,s) \mathrm{d}s  \\
 - & \frac{ \Lambda^{(q)}(x,r)   }{ \Lambda^{(q)}(b,r)   } \Bigg(  \frac{ \mathrm e^{\lambda b} \left( 1 - \mathrm e^{-(\psi(\lambda)-q) r} \right) }{ \psi(\lambda)-q}   -      \int_0^b W^{(q)}(b-z)  \mathrm e^{\lambda z}  \mathrm d z  - \int_0^r   \mathrm e^{-\psi(\lambda)s } \Lambda^{(q)}(b,s)   \mathrm{d}s \Bigg).
\end{split}
\end{equation*}
%
\end{theorem}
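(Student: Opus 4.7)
The plan is to reduce Theorem \ref{theo:main2} to Theorem \ref{thm_lapl} and Corollary \ref{LTpartwosided} via the standard exponential-martingale identity that links potential measures to Laplace transforms at the stopping time. First I would factor the deterministic constant out of the integrand by noting that
\[
\mathrm e^{-q(t-r)}\mathrm e^{\lambda X_t-\psi(\lambda)r}=\mathrm e^{-(\psi(\lambda)-q)r}\,\mathrm e^{-qt+\lambda X_t},
\]
so that with $T:=\tau_r\wedge\tau_b^+$ the left-hand side equals $\mathrm e^{-(\psi(\lambda)-q)r}\,\mathbb{E}_x\bigl[\int_0^T\mathrm e^{-qt+\lambda X_t}\,\mathrm dt\bigr]$.

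Next I would apply It\^o's formula (equivalently, Dynkin's formula, using that the generator of $X$ applied to $x\mapsto\mathrm e^{\lambda x}$ gives $\psi(\lambda)\mathrm e^{\lambda x}$) to $\mathrm e^{-qt+\lambda X_t}$ along the stopped process $t\mapsto t\wedge T\wedge n$, and let $n\to\infty$ using that $X_{t\wedge T\wedge n}\leq b$ before exit and that $q>0$ forces $\mathrm e^{-qT}=0$ on $\{T=\infty\}$ (with a separate limiting argument when $q=0$). This yields the key identity
\[
(\psi(\lambda)-q)\,\mathbb{E}_x\!\left[\int_0^T\mathrm e^{-qt+\lambda X_t}\,\mathrm dt\right]=\mathbb{E}_x\!\left[\mathrm e^{-qT+\lambda X_T}\mathbf{1}_{\{T<\infty\}}\right]-\mathrm e^{\lambda x},
\]
valid for $\psi(\lambda)\neq q$. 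Since $X$ has no positive jumps, $X_{\tau_b^+}=b$ on $\{\tau_b^+<\infty\}$, so I can split the right-hand expectation into the two disjoint events $\{\tau_r<\tau_b^+\}$ and $\{\tau_b^+<\tau_r\}$, obtaining
\[
\mathbb{E}_x\!\left[\mathrm e^{-qT+\lambda X_T}\mathbf{1}_{\{T<\infty\}}\right]=\mathrm e^{(\psi(\lambda)-q)r}\,A(x,b,r)+\mathrm e^{\lambda b}\,\frac{\Lambda^{(q)}(x,r)}{\Lambda^{(q)}(b,r)},
\]
where $A(x,b,r)$ denotes the left-hand side of Theorem \ref{thm_lapl} (the $\mathrm e^{(\psi(\lambda)-q)r}$ factor arises from converting $\mathrm e^{-q\tau_r+\lambda X_{\tau_r}}$ into $\mathrm e^{-q(\tau_r-r)+\lambda X_{\tau_r}-\psi(\lambda)r}$) and the second term uses Corollary \ref{LTpartwosided}.

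All that remains is to substitute the explicit expression for $A(x,b,r)$ from Theorem \ref{thm_lapl}, multiply through by $\mathrm e^{-(\psi(\lambda)-q)r}/(\psi(\lambda)-q)$, and collect terms; the factors of $\mathrm e^{\lambda x}$ and $\mathrm e^{\lambda b}$ that are not multiplied by $\psi(\lambda)-q$ combine with the $-\mathrm e^{\lambda x}$ and $\mathrm e^{\lambda b}\Lambda^{(q)}(x,r)/\Lambda^{(q)}(b,r)$ contributions from the potential identity to produce exactly the prefactors $\mathrm e^{\lambda x}(1-\mathrm e^{-(\psi(\lambda)-q)r})/(\psi(\lambda)-q)$ and $\mathrm e^{\lambda b}(1-\mathrm e^{-(\psi(\lambda)-q)r})/(\psi(\lambda)-q)$ that appear in the claimed formula, while the scale-function integrals and $\Lambda^{(q)}$ integrals land with the right signs. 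The main obstacle is twofold: first, the technical justification of the optional-stopping step at the possibly unbounded time $T$ (requiring a careful truncation-and-dominated-convergence argument, together with the integrability of $\mathrm e^{\lambda X_t}$ uniformly for $t\leq T$ via $X_t\leq b$); and second, the case $\psi(\lambda)=q$, which I would handle at the end by observing that both sides of the claimed identity are continuous (indeed analytic) in $\lambda$ on a neighbourhood of $\Phi(q)$, so the formula extends by letting $\lambda\to\Phi(q)$ and applying l'H\^opital's rule to the $(1-\mathrm e^{-(\psi(\lambda)-q)r})/(\psi(\lambda)-q)$ factors.
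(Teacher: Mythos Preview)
Your proposal is correct and follows essentially the same strategy as the paper: reduce the potential integral to $\mathbb{E}_x[\mathrm e^{-qT+\lambda X_T}]$ via the exponential-martingale identity, split this expectation over $\{\tau_r<\tau_b^+\}$ and $\{\tau_b^+<\tau_r\}$ using spectral negativity, and then invoke Theorem~\ref{thm_lapl} and Corollary~\ref{LTpartwosided}.

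Two minor differences are worth noting. First, the paper derives the key identity \eqref{resol_step1} not via Dynkin's formula but via the strong Markov property applied to the full-line potential $\mathbb{E}_x[\int_0^\infty \mathrm e^{-qt+\lambda X_t}\mathrm dt]=\mathrm e^{\lambda x}/(q-\psi(\lambda))$; this forces the restriction $\lambda\in[0,\Phi(q))$. Second, rather than extending to all $\lambda\geq 0$ by analytic continuation as you suggest, the paper takes a detour: it first inverts the Laplace transform in $\lambda$ (via uniqueness) to obtain the $q$-potential measure itself (Theorem~\ref{theo:general2}), and only then recovers Theorem~\ref{theo:main2} for all $\lambda\geq 0$ by taking Laplace transforms of that measure. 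Your Dynkin-based argument, with the observation that $X_t\leq b$ on $\{t<T\}$ gives the needed domination, is more direct and avoids this round trip; the paper's route has the compensating advantage of producing the potential measure as a by-product.
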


We can actually invert this Laplace transform to get an expression for this $q$-potential measure itself, see Theorem \ref{theo:general2} below. This expression is quite difficult to numerically evaluate and therefore we recommend to   invert numerically the Laplace transform in Theorem \ref{theo:main2} for computing the aforementioned $q$-potential measure.
However, it turns out that there is a very simple expression for this $q$-potential measure when restricted on the positive half-line and we provide this result here.
\begin{prop}\label{prop_resolpos}
	Let $r,b>0$ and $q\geq 0$. Then for $x\leq b$ and $y\geq 0$,
	\begin{equation*}
	\begin{split}
	\int_0^\infty \mathrm e^{-q t}  \mathbb P_x(X_t\in\mathrm d y&,  t<\tau_r\wedge\tau_b^+) \mathrm d t = \left( \frac{\Lambda^{(q)}(x,r)}{\Lambda^{(q)}(b,r)} W^{(q)}(b-y) - W^{(q)}(x-y)  \right) \mathrm d y.
	\end{split}
	\end{equation*}
	
\end{prop}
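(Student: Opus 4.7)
The plan is to express the Parisian-killed resolvent in terms of the $q$-potential of $X$ killed only at $\tau_b^+$, and to extract the necessary exponential moment from Theorem \ref{thm_lapl} at the point $\lambda=\Phi(q)$. Write $U_r(x,\mathrm dy)$ for the left-hand side of the proposition and set $U_b(z,\mathrm dy):=\int_0^\infty e^{-qt}\mathbb P_z(X_t\in\mathrm dy,\,t<\tau_b^+)\,\mathrm dt$. Splitting the occupation integral over $[0,\tau_r)$ and $[\tau_r,\tau_b^+)$ on the event $\{\tau_r<\tau_b^+\}$ and applying the strong Markov property at $\tau_r$ gives
\begin{equation*}
U_r(x,\mathrm dy) = U_b(x,\mathrm dy) - \mathbb E_x\!\left[e^{-q\tau_r}\mathbf 1_{\{\tau_r<\tau_b^+\}}\,U_b(X_{\tau_r},\mathrm dy)\right].
\end{equation*}
For $U_b$ I would quote the classical $q$-resolvent formula $U_b(z,\mathrm dy)=\bigl[e^{-\Phi(q)(b-z)}W^{(q)}(b-y)-W^{(q)}(z-y)\bigr]\mathrm dy$ valid for $z\le b$ (see e.g.\ Theorem 8.7 of \cite{Kyprianou}).

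The decisive simplification is that on the event $\{\tau_r<\tau_b^+\}$ the Parisian clock has been running below zero for time $r$, so $X_{\tau_r}\le 0$ almost surely, and for $y>0$ the term $W^{(q)}(X_{\tau_r}-y)$ therefore vanishes. Consequently $U_b(X_{\tau_r},\mathrm dy)$ collapses, when restricted to $y>0$, to $e^{-\Phi(q)(b-X_{\tau_r})}W^{(q)}(b-y)\mathrm dy$, and the whole problem reduces to computing $\mathbb E_x\!\left[e^{-q\tau_r+\Phi(q)X_{\tau_r}}\mathbf 1_{\{\tau_r<\tau_b^+\}}\right]$. Here I apply Theorem \ref{thm_lapl} with $\lambda=\Phi(q)$: because $\psi(\Phi(q))=q$, every term multiplied by $(\psi(\lambda)-q)$ on the right-hand side of Theorem \ref{thm_lapl} vanishes, and the identity collapses to
\begin{equation*}
\mathbb E_x\!\left[e^{-q\tau_r+\Phi(q)X_{\tau_r}}\mathbf 1_{\{\tau_r<\tau_b^+\}}\right] = e^{\Phi(q)x} - \frac{\Lambda^{(q)}(x,r)}{\Lambda^{(q)}(b,r)}\,e^{\Phi(q)b}.
\end{equation*}

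Substituting this into the decomposition, the $e^{\Phi(q)x}e^{-\Phi(q)b}W^{(q)}(b-y)$ piece cancels exactly the $e^{-\Phi(q)(b-x)}W^{(q)}(b-y)$ coming from $U_b(x,\mathrm dy)$, while the ratio $\Lambda^{(q)}(x,r)/\Lambda^{(q)}(b,r)$ emerges as the claimed coefficient of $W^{(q)}(b-y)$, and the remaining $-W^{(q)}(x-y)\mathrm dy$ persists from $U_b(x,\mathrm dy)$. I do not anticipate any serious obstacle: the argument is uniform in the path-variation regime (mirroring Theorem \ref{thm_lapl}), and the only point needing a brief justification is $X_{\tau_r}\le 0$, which is immediate from the definition of $\tau_r$ in \eqref{eq:ruintime}.
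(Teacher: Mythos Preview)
Your proposal is correct and shares the paper's key idea: both arguments hinge on the observation that $X_{\tau_r}\leq 0$ on $\{\tau_r<\tau_b^+\}$ (in fact $X_{\tau_r}<0$, so the case $y=0$ is covered too), which kills the $W^{(q)}(X_{\tau_r}-y)$ contribution and reduces everything to evaluating $\mathbb E_x[\mathrm e^{-q\tau_r+\Phi(q)X_{\tau_r}}\mathbf 1_{\{\tau_r<\tau_b^+\}}]$ via Theorem~\ref{thm_lapl} at $\lambda=\Phi(q)$, where $\psi(\lambda)-q=0$ wipes out the complicated terms.

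The only real difference is in the ambient resolvent used for the strong-Markov decomposition. The paper decomposes against the \emph{free} $q$-potential $\tfrac{\mathrm e^{\Phi(q)(x-y)}}{\psi'(\Phi(q))}-W^{(q)}(x-y)$; this forces it to treat the $\{\tau_b^+<\tau_r\}$ contribution separately via Corollary~\ref{LTpartwosided} and to pass through a Laplace transform in $y$ (with a final appeal to uniqueness). You instead decompose against the $\tau_b^+$-killed potential $\mathrm e^{-\Phi(q)(b-z)}W^{(q)}(b-y)-W^{(q)}(z-y)$, which already has the upper barrier built in, so only the $\{\tau_r<\tau_b^+\}$ piece remains and you can work directly at the level of densities without any Laplace inversion. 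Your route is a shade more economical, but the substance is identical.
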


Finally, we provide the versions of Theorems \ref{thm_lapl} and \ref{theo:main2} when $b=\infty$.
\begin{corol}\label{cor_lapl_limb}
	For $q,\lambda\geq 0$, $r>0$ and $x\in\mathbb R$,
	\begin{equation*}
	\begin{split}
	\mathbb{E}_x & \left[ \mathrm{e}^{-q ( \tau_r - r )} \mathrm e^{ \lambda X_{\tau_r} -\psi(\lambda)r } \mathbf{1}_{\{\tau_r<\infty\}} \right] \\
	= &   \mathrm e^{\lambda x } -  (\psi(\lambda)-q)  \left[ \int_0^x W^{(q)}(x-z)  \mathrm e^{\lambda z}  \mathrm d z
	+ \int_0^r   \mathrm e^{-\psi(\lambda) s } \Lambda^{(q)}(x,s)   \mathrm{d}s \right] \\
	& - \frac{ \Lambda^{(q)}(x,r) (\psi(\lambda)-q)  }{ \int_0^\infty \mathrm e^{\Phi(q)z}\frac zr\mathbb P(X_r\in\mathrm d z) } \left(  \frac{1}{\lambda-\Phi(q)} +   \int_0^r   \mathrm e^{-\psi(\lambda) s } \int_0^\infty \mathrm e^{\Phi(q)z}\frac zs\mathbb P(X_s\in\mathrm d z)   \mathrm{d}s \right),
	\end{split}
	\end{equation*}
where for $\lambda=\Phi(q)$ the ratio $\frac{(\psi(\lambda)-q)}{\lambda-\Phi(q)}$ is understood in the limiting sense $\lim_{\lambda\to\Phi(q)}\frac{(\psi(\lambda)-q)}{\lambda-\Phi(q)}=\psi'(\Phi(q))$.
\end{corol}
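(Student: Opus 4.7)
The plan is to derive Corollary~\ref{cor_lapl_limb} by sending $b\to\infty$ in Theorem~\ref{thm_lapl}. The first line of the right-hand side of Theorem~\ref{thm_lapl} does not depend on $b$ and carries through unchanged. On the left-hand side, since $\tau_b^+\uparrow\infty$ almost surely as $b\to\infty$, one has $\mathbf{1}_{\{\tau_r<\tau_b^+\}}\to\mathbf{1}_{\{\tau_r<\infty\}}$ pointwise; combined with the bound $\mathrm{e}^{\lambda X_{\tau_r}}\le 1$ (because $X_{\tau_r}\le 0$ at Parisian ruin) and the deterministic exponential factors, bounded convergence yields the desired left-hand side.

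All of the real work lies in computing
\[
\lim_{b\to\infty}\frac{A(b)}{\Lambda^{(q)}(b,r)},\qquad A(b):=\mathrm{e}^{\lambda b}-(\psi(\lambda)-q)\!\left[\int_0^b W^{(q)}(b-z)\mathrm{e}^{\lambda z}\mathrm{d}z+\int_0^r\mathrm{e}^{-\psi(\lambda)s}\Lambda^{(q)}(b,s)\mathrm{d}s\right].
\]
The key analytic ingredient is the scale-function limit $W^{(q)}(x)\mathrm{e}^{-\Phi(q)x}=W_{\Phi(q)}(x)\to 1/\psi'(\Phi(q))$ recalled in Section~\ref{sec:prel} (with $W_{\Phi(q)}$ nondecreasing and hence bounded above by this limit when $q>0$). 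Using Kendall's identity I would first rewrite
\[
\int_0^r\mathrm{e}^{-\psi(\lambda)s}\Lambda^{(q)}(b,s)\mathrm{d}s=\int_0^\infty W^{(q)}(b+z)G_z(r)\mathrm{d}z,\qquad G_z(r):=\int_0^r\mathrm{e}^{-\psi(\lambda)s}\mathbb{P}(\tau_z^+\in\mathrm{d}s),
\]
where $G_z(r)$ decays exponentially fast in $z$ (Chernoff-bound $\mathbb{P}(\tau_z^+\le r)\le C\mathrm{e}^{-\mu z}$ for any $\mu>\Phi(q)$), so that a single application of dominated convergence in $z$ after dividing by $\mathrm{e}^{\Phi(q)b}$ produces its limit. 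The same reasoning applied to $\Lambda^{(q)}(b,r)$ itself yields $\mathrm{e}^{-\Phi(q)b}\Lambda^{(q)}(b,r)\to\psi'(\Phi(q))^{-1}\int_0^\infty\mathrm{e}^{\Phi(q)z}\tfrac{z}{r}\mathbb{P}(X_r\in\mathrm{d}z)$. Finally, the substitution $u=b-z$ together with \eqref{eq:scale} shows that for $\lambda>\Phi(q)$,
\[
\mathrm{e}^{\lambda b}-(\psi(\lambda)-q)\!\int_0^b W^{(q)}(b-z)\mathrm{e}^{\lambda z}\mathrm{d}z=(\psi(\lambda)-q)\mathrm{e}^{\lambda b}\!\int_b^\infty W^{(q)}(u)\mathrm{e}^{-\lambda u}\mathrm{d}u,
\]
which divided by $\mathrm{e}^{\Phi(q)b}$ tends to $(\psi(\lambda)-q)/\bigl(\psi'(\Phi(q))(\lambda-\Phi(q))\bigr)$; for $0\le\lambda<\Phi(q)$ the same asymptotic is verified directly, noting that $\mathrm{e}^{\lambda b}$ is then negligible against $\int_0^bW^{(q)}(b-z)\mathrm{e}^{\lambda z}\mathrm{d}z\sim\mathrm{e}^{\Phi(q)b}/\bigl(\psi'(\Phi(q))(\Phi(q)-\lambda)\bigr)$.

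Assembling these three limits, dividing by the limiting value of $\mathrm{e}^{-\Phi(q)b}\Lambda^{(q)}(b,r)$, and multiplying by $-\Lambda^{(q)}(x,r)$ as prescribed by Theorem~\ref{thm_lapl} reproduces the third term of the corollary. The boundary case $\lambda=\Phi(q)$ then follows from continuity in $\lambda$ of both sides and the convention $\lim_{\lambda\to\Phi(q)}(\psi(\lambda)-q)/(\lambda-\Phi(q))=\psi'(\Phi(q))$. The main obstacle I anticipate is producing a dominating function for the inner integrals that is integrable near $s=0$, where the measure $\tfrac{z}{s}\mathbb{P}(X_s\in\mathrm{d}z)$ can be singular; the Kendall-identity rewriting above is precisely the device that sidesteps this difficulty by re-expressing the problematic $s$-integral as a well-behaved $z$-integral.
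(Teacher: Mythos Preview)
Your proposal is correct and mirrors the paper's own proof: let $b\to\infty$ in Theorem~\ref{thm_lapl}, normalise by $\mathrm e^{\Phi(q)b}$ (the paper divides by $W^{(q)}(b)$ instead, which is asymptotically equivalent by \eqref{eq:scale2}), and identify the limit of each piece via dominated convergence and the case split $\lambda<\Phi(q)$, $\lambda>\Phi(q)$, $\lambda=\Phi(q)$ --- your Kendall-identity rewriting is just a more explicit version of the paper's application of \eqref{lim_scaleLam}. The one point you leave implicit is the reduction to $q>0$ (needed so that $W_{\Phi(q)}$ is bounded, as you yourself note), which the paper handles up front by recovering $q=0$ afterwards via monotone convergence as $q\downarrow 0$.
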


\begin{corol}\label{cor_laplresol_limb}
	Let $q\geq 0$, $r>0$ and $x\in\mathbb R$. Then, if  $\lambda\in[0,\Phi(q))$,
	\begin{equation*}
	\begin{split}
	\mathbb{E}_x & \left[\int_0^{\tau_r} \mathrm e^{-q(t-r)} \mathrm e^{\lambda X_t -\psi(\lambda)r} \mathrm{d}t \right] \\
	= &    \frac{ \mathrm e^{\lambda x} \left( 1 - \mathrm e^{-(\psi(\lambda)-q) r} \right)  }{ \psi(\lambda)-q} -      \int_0^x W^{(q)}(x-z)  \mathrm e^{\lambda z}  \mathrm d z
	- \int_0^r   \mathrm e^{-\psi(\lambda) s } \Lambda^{(q)}(x,s)   \mathrm{d}s  \\
	& - \frac{ \Lambda^{(q)}(x,r)  }{ \int_0^\infty \mathrm e^{\Phi(q)z}\frac zr\mathbb P(X_r\in\mathrm d z) } \left(  \frac{1}{\lambda-\Phi(q)} +   \int_0^r   \mathrm e^{-\psi(\lambda) s } \int_0^\infty \mathrm e^{\Phi(q)z}\frac zs\mathbb P(X_s\in\mathrm d z)   \mathrm{d}s \right),
	\end{split}
	\end{equation*}
	whereas if $\lambda\geq\Phi(q)$, then the left hand side equals $+\infty$.
\end{corol}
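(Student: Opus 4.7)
The plan is to derive Corollary \ref{cor_laplresol_limb} by letting $b \to \infty$ in Theorem \ref{theo:main2}. Since $X$ has no positive jumps, $\tau_b^+ \uparrow \infty$ a.s., so $\tau_r \wedge \tau_b^+ \uparrow \tau_r$; as the integrand is non-negative for $\lambda \geq 0$, monotone convergence identifies the left-hand side of Theorem \ref{theo:main2} with the target. The first three terms on the right of Theorem \ref{theo:main2} do not depend on $b$ and pass through unchanged, so the remaining task is to identify
$$\lim_{b\to\infty}\frac{1}{\Lambda^{(q)}(b,r)}\Bigg(\frac{\mathrm e^{\lambda b}(1-\mathrm e^{-(\psi(\lambda)-q)r})}{\psi(\lambda)-q}-\int_0^b W^{(q)}(b-z)\mathrm e^{\lambda z}\mathrm dz-\int_0^r\mathrm e^{-\psi(\lambda)s}\Lambda^{(q)}(b,s)\mathrm ds\Bigg).$$

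The cornerstone is the factorisation $W^{(q)}(y)=\mathrm e^{\Phi(q)y}W_{\Phi(q)}(y)$ from (\ref{eq:scale2}) together with $W_{\Phi(q)}(y) \uparrow 1/\psi'(\Phi(q))$ as $y \to \infty$, recalled in Section \ref{sec:prel}. Plugging this into the definition (\ref{eq:Lambdaq}) and applying dominated convergence (after dividing by $\mathrm e^{\Phi(q)b}$, a dominating function is $\mathrm e^{\Phi(q)z}/\psi'(\Phi(q)) \cdot (z/s)\mathbb P(X_s\in\mathrm dz)$, integrable thanks to $\mathbb E[X_s\mathrm e^{\Phi(q)X_s}]=s\psi'(\Phi(q))\mathrm e^{qs}<\infty$) produces $\Lambda^{(q)}(b,s) \sim \mathrm e^{\Phi(q)b} I(s)/\psi'(\Phi(q))$ for every fixed $s>0$, where $I(s):=\int_0^\infty \mathrm e^{\Phi(q)z}(z/s)\mathbb P(X_s\in\mathrm dz)$. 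The substitution $u=b-z$ combined with the same asymptotic yields, for $\lambda\in[0,\Phi(q))$, $\int_0^b W^{(q)}(b-z)\mathrm e^{\lambda z}\mathrm dz \sim \mathrm e^{\Phi(q)b}/[(\Phi(q)-\lambda)\psi'(\Phi(q))]$, while the $\mathrm e^{\lambda b}$-term is $o(\mathrm e^{\Phi(q)b})$ and washes out in the ratio. For the $s$-integral I would invoke dominated convergence with the uniform-in-$b$ bound $\Lambda^{(q)}(b,s)/\Lambda^{(q)}(b,r) \leq 2 I(s)/I(r)$ (valid for $b$ large, via $W_{\Phi(q)} \leq 1/\psi'(\Phi(q))$) to exchange the limit and integration. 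Assembling the three pieces and using $1/(\Phi(q)-\lambda)=-1/(\lambda-\Phi(q))$ yields the claimed $b$-free expression.

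When $\lambda \geq \Phi(q)$, I would apply an Esscher change of measure $\mathrm d\mathbb P^\lambda_x/\mathrm d\mathbb P_x|_{\mathcal F_t}=\mathrm e^{\lambda(X_t-x)-\psi(\lambda)t}$ to rewrite the left-hand side, via Fubini, as $\mathrm e^{\lambda x-\psi(\lambda)r+qr}\mathbb E^\lambda_x[\int_0^{\tau_r}\mathrm e^{(\psi(\lambda)-q)t}\mathrm dt]$. Under $\mathbb P^\lambda$, $X$ is again spectrally negative with mean drift $\psi'(\lambda)>0$, so $\mathbb P^\lambda_x(\tau_r=\infty)>0$; since $\psi(\lambda)\geq q$, the integrand is bounded below by $1$ and the inner integral dominates $\tau_r$, whose $\mathbb P^\lambda_x$-expectation is infinite, delivering the stated $+\infty$. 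The main technical obstacle is securing the $o(\mathrm e^{\Phi(q)b})$ error in $\int_0^b W^{(q)}(b-z)\mathrm e^{\lambda z}\mathrm dz$ for $\lambda \in [0,\Phi(q))$: splitting $W_{\Phi(q)}(u)=1/\psi'(\Phi(q))+[W_{\Phi(q)}(u)-1/\psi'(\Phi(q))]$, the first piece yields the leading exponential exactly, while the vanishing-at-infinity remainder must be shown to contribute only at lower order via a standard $\varepsilon$-$M$ argument on $[0,M]\cup[M,b]$.
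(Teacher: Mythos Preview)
Your approach for the case $\lambda\in[0,\Phi(q))$ is essentially the paper's: both let $b\to\infty$ in Theorem \ref{theo:main2} via monotone convergence and identify the limit of the bracketed $b$-term using the factorisation \eqref{eq:scale2} together with $W_{\Phi(q)}(y)\to 1/\psi'(\Phi(q))$. The paper simply recycles the two limits \eqref{lim_scaleLam} and \eqref{limit_ident} already established in the proof of Corollary \ref{cor_lapl_limb}, whereas you rederive them and add an explicit dominated-convergence bound for the $s$-integral that the paper leaves implicit. One omission: you invoke $W_{\Phi(q)}(y)\uparrow 1/\psi'(\Phi(q))$ without first reducing to $q>0$, but Section \ref{sec:prel} only guarantees this limit when $q>0$; the paper makes this reduction explicitly (and recovers $q=0$ afterwards by monotone convergence). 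Without it your asymptotics and your dominating function break down when $q=0$ and $\psi'(0)=0$.

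For $\lambda\geq\Phi(q)$ your route is genuinely different. The paper stays on the right-hand side of Theorem \ref{theo:main2}: it shows $\mathrm e^{\lambda b}/\Lambda^{(q)}(b,r)\to\infty$ for $\lambda>\Phi(q)$ and treats $\lambda=\Phi(q)$ by a separate direct computation, so that the bracketed term divided by $\Lambda^{(q)}(b,r)$ tends to $-\infty$ and the whole expression to $+\infty$. You instead attack the left-hand side via an Esscher transform, reducing the claim to $\mathbb E^\lambda_x[\tau_r]=\infty$, which follows from $\psi'(\lambda)>0$ forcing $\mathbb P^\lambda_x(\tau_r=\infty)>0$. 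This is conceptually cleaner and avoids the case split at $\lambda=\Phi(q)$, at the cost of importing an extra fact (positive drift implies $\mathbb P(\tau_r=\infty)>0$, which follows e.g.\ from the Parisian ruin probability formula in \cite{Loeffen}). Note again that $\psi'(\Phi(q))>0$ requires $q>0$ (or $q=0$ with $\psi'(0)\neq 0$), so the same preliminary reduction to $q>0$ is needed to secure the boundary case $\lambda=\Phi(q)$ in your argument as well.
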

Note that by setting $q=\lambda=0$ in Corollary \ref{cor_lapl_limb}, we recover the expression for the Parisian ruin probability found in \cite{Loeffen}.

\section{Proofs of main results}\label{sec:proofs}

\subsection{Proof of Theorem \ref{thm_lapl}}

We first introduce some further notations. Given the spectrally negative L\'evy process with Laplace exponent given by \eqref{eq:exponent}, we let $\mathcal A$ be the operator
\begin{equation}
\label{generator}
\mathcal{A}h(x)=\mu h^{\prime}(x) + \frac{\sigma}{2} h''(x)+
\int_{-\infty}^{0} (h(x+\theta)-h(x) -\mathbf{1}_{\{\theta>-1\}}
y h^{\prime}(x))\Pi(\mathrm{d}\theta),
\end{equation}
where $h:\mathbb R\to\mathbb R$ is any function such that the right hand side above is well-defined. Note that $\mathcal A$ coincides with the infinitesimal generator of $X$  on $C^2_0(\mathbb R)$, the space of twice continuously differentiable functions that together with its first two derivatives vanish at $+\infty$ and $-\infty$, see e.g. \cite[Theorem 31.5]{Sato}.

The assertion of Theorem \ref{thm_lapl} is a simple consequence of the following more general result, which provides the expected discounted penalty function at Parisian ruin, provided this happens before $\tau_b^+$. This result can be seen as an extension of Corollary 3 in \cite{ronnie} to the case where $r>0$. There are several conditions imposed on the penalty function $f$ and its extension $\widetilde f$.
We remark that conditions (i)-(vii) below are all satisfied when $\widetilde f$ is in $C^2_0(\mathbb R)$.

\begin{theorem}\label{thm_mainresult}
Let $r,b>0$ and $q\geq 0$.  
Let $f:(-\infty,0]\to\mathbb R$ be a function such that there exists $\widetilde f:\mathbb R\to\mathbb R$ with $\widetilde f|_{(-\infty,0]}\equiv f$ and satisfying the following conditions:
\begin{itemize}
	\item[(i)] $\widetilde f$ is twice continuously differentiable on $\mathbb R$ and for any $x\in\mathbb R$, the following Kolmogorov forward equation holds:
\begin{equation*}
 \mathrm e^{-q r} \mathbb E_x \left[ \widetilde f(X_r) \right] = \widetilde f(x) +  \int_0^r    \mathrm e^{-q s}  \mathbb E_x \left[ (\mathcal A-q)\widetilde f (X_{s})  \right] \mathrm d s;
\end{equation*}
	
	\item[(ii)]  for any $x\in\mathbb R$, $\sup_{s\in[0,r]} \mathbb E_x \left[ \left| (\mathcal A-q)\widetilde f (X_{s}) \right| \right] <\infty$;
	\item[(iii)] for some $\delta>0$, $x\mapsto \mathbb E_x [\widetilde f(X_r)]$ is twice continuously differentiable on $(-\delta,b)$;
	\item[(iv)] for some $z>0$, $\sup_{x\in[0,b]}  \int_{-\infty}^{-z} \left| \mathbb E_{x+\theta} \left[ \widetilde f(X_r) \right] \right| \Pi(\mathrm d \theta)<\infty$;
	
	\item[(v)] for some $\delta\in(0,b)$,  $\sup_{x\in[0,\delta]} \left| (\mathcal A-q) \mathbb E_x [\widetilde f(X_r)]\right|<\infty$;
	
	\item[(vi)] for some $\delta>0$, $x\mapsto\mathbb E_x \left[  (\mathcal A-q)\widetilde f (X_{s})  \right]$ is continuous on $[0,\delta]$ for all $s\in[0,r]$;
	
	\item[(vii)] $\sup_{x\in(-\infty,b]}|\widetilde f(x)|<\infty$.
\end{itemize}
Then for $x\leq b$,
\begin{equation*}
\begin{split}
\mathbb{E}_x   \left[ \mathrm{e}^{-q(\tau_r-r)} f(X_{\tau_r})\mathbf{1}_{\{\tau_r<\tau_b^+\}} \right]
= &  \zeta(x;r,q) - \frac{ \Lambda^{(q)}(x,r)   }{ \Lambda^{(q)}(b,r)   }\zeta(b;r,q),
\end{split}
\end{equation*}
where for $y\leq b$,
\begin{equation*}
\begin{split}
\zeta(y;r,q)
:= & \mathbb E_{y} \left[ \widetilde f (X_r)  \right] -  \int_0^y W^{(q)}(y-z)    (\mathcal A-q) \mathbb E_{z} \left[ \widetilde f (X_r)  \right]  \mathrm d z\\
& -  \int_0^r  \mathbb E \left[ (\mathcal A-q)\widetilde f (X_{r-s})  \right]  \Lambda^{(q)}(y,s)   \mathrm{d}s.
\end{split}
\end{equation*}
\end{theorem}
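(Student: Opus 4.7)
The plan is to adapt the overshoot methodology of \cite{ronnie}, which proves the analogous formula for classical ruin ($r=0$), to the Parisian setting by exploiting the Kolmogorov forward equation in condition (i). The additional $r$-time delay built into Parisian ruin---that the fatal excursion below zero must last at least $r$ time units---will be absorbed by trading the time-$r$ expectation $\mathbb E_y[\widetilde f(X_r)]$ against an integral of $(\mathcal A-q)\widetilde f$ over $[0,r]$, which is precisely what (i) allows.

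I would introduce the auxiliary function $\widetilde h(y):=\mathbb E_y[\widetilde f(X_r)]$ and apply the strong Markov property at $\tau_0^-$: on $\{\tau_0^-<\tau_b^+\}$ the process restarts from $X_{\tau_0^-}<0$, and from there it either creeps back to $0$ before $r$ time elapses (in which case we begin anew, since $X_{\tau_0^+}=0$ for spectrally negative $X$) or stays strictly below $0$ for exactly $r$ time units, producing Parisian ruin at $\tau_0^-+r$ at level $X_{\tau_0^-+r}$. Rearranging this decomposition, the first two terms of $\zeta(y;r,q)$---namely $\mathbb E_y[\widetilde f(X_r)]$ and the $W^{(q)}$ convolution---emerge from applying Corollary~3 of \cite{ronnie} to the auxiliary penalty $\widetilde h$, after verifying its hypotheses via (iii)--(vii). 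The remaining term, the integral against $\Lambda^{(q)}(y,s)$, captures the contribution of the short excursions (those returning to $0$ before time $r$): by Kendall's identity $\mathbb P(\tau_z^+\in\mathrm d s)\mathrm d z=(z/s)\mathbb P(X_s\in\mathrm d z)\mathrm d s$, the excursion length density combines with the $(r-s)$-residual produced by applying (i) to the post-return piece to give exactly
\begin{equation*}
\int_0^r\mathbb E[(\mathcal A-q)\widetilde f(X_{r-s})]\Lambda^{(q)}(y,s)\,\mathrm d s.
\end{equation*}
Finally, the prefactor $\Lambda^{(q)}(x,r)/\Lambda^{(q)}(b,r)$ is determined by matching at the upper boundary (so that the answer vanishes on $\{\tau_b^+<\tau_r\}$), in the same manner as Corollary~\ref{LTpartwosided}.

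The principal obstacle will be the careful bookkeeping required when combining the strong-Markov decomposition with the Kolmogorov forward equation, given that the generator $\mathcal A$ may involve an infinite Lévy measure and $\widetilde h$ need not be bounded or compactly supported. Conditions (ii)--(vii) are designed precisely to license the necessary interchanges of integrals, differentiation under the expectation, and applications of $\mathcal A$ to $\widetilde h$; verifying their sufficiency at each step---in particular that $\widetilde h$ meets the hypotheses of Corollary~3 of \cite{ronnie} and that all the Fubini swaps involved in extracting $\Lambda^{(q)}(y,s)$ are legitimate---is where most of the technical effort will lie.
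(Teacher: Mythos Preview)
Your outline is close to the paper's own strategy: decompose at $\tau_0^-$ via the strong Markov property, apply Corollary~3 of \cite{ronnie} to $\widetilde h(y)=\mathbb E_y[\widetilde f(X_r)]$ to produce the first two terms of $\zeta$, and use the Kolmogorov forward equation together with Kendall's identity to extract the $\Lambda^{(q)}(y,s)$ integral from the ``short excursion'' contribution. These are indeed the main ingredients.

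There is, however, a genuine gap in your renewal step. You write that when the process returns to zero before time $r$ ``we begin anew, since $X_{\tau_0^+}=0$''. This works when $X$ has paths of bounded variation (equivalently $W^{(q)}(0)>0$), because then $0$ is irregular for $(-\infty,0)$ and the renewal at $0$ is a clean restart. But in the unbounded variation case, $0$ is regular for both half-lines, so under $\mathbb P_0$ one has $\tau_0^-=0$ a.s.\ and the recursion you describe degenerates. The paper handles this by introducing, for $\varepsilon>0$, the modified ruin time $\tau_r^\varepsilon$ (the first time an excursion below $0$ of length $>r$ occurs, where excursions end upon reaching level $\varepsilon$ rather than $0$). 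The renewal is then performed at level $\varepsilon>0$, yielding an explicit formula for $\mathbb E_\varepsilon[\mathrm e^{-q\tau_r^\varepsilon}\widetilde f(X_{\tau_r^\varepsilon})\mathbf 1_{\{\tau_r^\varepsilon<\tau_b^+\}}]$ by solving the resulting implicit equation, and only then does one send $\varepsilon\downarrow 0$. Conditions (v)--(vii) are used precisely to justify this limit. Your proposal does not account for this, and without it the argument is incomplete for a general spectrally negative L\'evy process.

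A second, smaller point: the prefactor $\Lambda^{(q)}(x,r)/\Lambda^{(q)}(b,r)$ is not obtained in the paper by ``matching at the upper boundary''. It emerges organically when one sets $x=\varepsilon$ in the strong-Markov decomposition and solves the resulting linear equation for $\mathbb E_\varepsilon[\,\cdot\,]$; the ratio $W^{(q)}(\varepsilon)/W^{(q)}(b)$ coming from Corollary~3 of \cite{ronnie} combines with the $\Lambda^{(q)}(b-\varepsilon,r)$ from Lemma~\ref{lem_overshoot_Lamq} to produce exactly this factor after the limit $\varepsilon\downarrow 0$. Your boundary-matching heuristic would require an a~priori uniqueness argument that you have not supplied.
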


In the majority of cases for the function $f$ it should be easier to compute this expected discounted penalty function via Theorem \ref{thm_lapl} and Laplace inversion rather than using Theorem \ref{thm_mainresult} directly. The reason why we still include Theorem 4.1 for a class of functions $f$ satisfying (i)-(vii) rather than working directly with $f(x)=\mathrm e^{\lambda x}$, is because in this way the proof of Theorem \ref{thm_lapl} becomes much more transparent.




\paragraph{Proof of Theorem \ref{thm_lapl}}
Let $\lambda\geq 0$. We want to use Theorem \ref{thm_mainresult} with $f(x)=\mathrm e^{\lambda x}$. We choose $\widetilde f(x)=\mathrm e^{\lambda x}$ for $x\in\mathbb R$. It is easy to show via \eqref{lapl_X}, \eqref{eq:exponent} and \eqref{generator} that then, for $x\in\mathbb R$,
\begin{equation*}
\mathbb E_x[\widetilde f(X_r)] = \mathrm e^{\lambda x+\psi(\lambda)r}, \quad (\mathcal A-q) \widetilde f(x) = \mathrm e^{\lambda x}(\psi(\lambda)-q).
\end{equation*}
Hence all the conditions on $\widetilde f$ in Theorem \ref{thm_mainresult} are satisfied and Theorem \ref{thm_lapl} follows. \exit


\subsection{Proof of Theorem \ref{thm_mainresult}}

We start by providing two lemmas. The first one follows from \cite[Lem. 8]{CzarnaRenaud} (where $\delta=0$ in \cite{CzarnaRenaud}) and the spatial homogeneity of $X$.
\begin{lemma}\label{lem_overshoot_Lamq}
Let $r,b>0$, $q\geq 0$ and let $\varepsilon\in[0,b)$. Then for $x\leq \varepsilon$,
\begin{equation*}
\mathbb E_x \left[ \mathrm e^{-q\tau_\varepsilon^+} \mathbf 1_{\{\tau_\varepsilon^+\leq r\}}  \right] =   \mathrm e^{-q r} \Lambda^{(q)}(x-\varepsilon,r)
\end{equation*}
and for $x\leq b$,
\begin{equation*}
\begin{split}
\mathbb{E}_x \Big[ \mathrm{e}^{-q(\tau_0^- -r)} \mathbf{1}_{\{\tau_0^-<\tau_b^+\}}
\mathbb{E}_{X_{\tau_0^-}} \left[ \mathrm e^{-q\tau_\varepsilon^+} \mathbf 1_{\{\tau_\varepsilon^+\leq r\}} \right] \Big]
= &  \mathbb{E}_x \left[ \mathrm{e}^{-q\tau_0^-} \mathbf{1}_{\{\tau_0^-<\tau_b^+\}} \Lambda^{(q)}(X_{\tau_0^-}-\varepsilon,r) \right] \\
= &   \Lambda^{(q)}(x-\varepsilon,r) - \frac{W^{(q)}(x)}{W^{(q)}(b)}  \Lambda^{(q)}(b-\varepsilon,r).
\end{split}
\end{equation*}
\end{lemma}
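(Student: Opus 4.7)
Part~1 I would handle exactly as the paper indicates just before the lemma: by spatial homogeneity, $\mathbb E_x[\mathrm e^{-q\tau_\varepsilon^+}\mathbf 1_{\{\tau_\varepsilon^+\leq r\}}] = \mathbb E[\mathrm e^{-q\tau_{\varepsilon-x}^+}\mathbf 1_{\{\tau_{\varepsilon-x}^+\leq r\}}]$ with $\varepsilon-x\geq 0$, and \cite[Lem.~8]{CzarnaRenaud} (specialized to $\delta=0$) combined with Kendall's identity rewrites the latter as $\mathrm e^{-qr}\Lambda^{(q)}(x-\varepsilon,r)$.

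For Part~2, the first equality is immediate from the strong Markov property applied at $\tau_0^-$ combined with Part~1: since $X_{\tau_0^-}\leq 0\leq \varepsilon$ on $\{\tau_0^-<\infty\}$, Part~1 applies to the inner expectation and produces a factor of $\mathrm e^{-qr}$ that cancels the $\mathrm e^{qr}$ hidden in $\mathrm e^{-q(\tau_0^--r)}$, leaving exactly $\mathbb E_x[\mathrm e^{-q\tau_0^-}\mathbf 1_{\{\tau_0^-<\tau_b^+\}}\Lambda^{(q)}(X_{\tau_0^-}-\varepsilon,r)]$.

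The second equality is the main work. The plan is to expand $\Lambda^{(q)}(X_{\tau_0^-}-\varepsilon,r)$ using its defining integral \eqref{eq:Lambdaq} and interchange expectation and integral by Tonelli (justified by non-negativity), reducing the task to computing $\mathbb E_x[\mathrm e^{-q\tau_0^-}\mathbf 1_{\{\tau_0^-<\tau_b^+\}}W^{(q)}(X_{\tau_0^-}-\varepsilon+z)]$ for each $z\geq 0$. The key tool is the standard scale-function identity
\begin{equation*}
\mathbb E_x\bigl[\mathrm e^{-q\tau_0^-}\mathbf 1_{\{\tau_0^-<\tau_b^+\}}W^{(q)}(X_{\tau_0^-}+a)\bigr] = W^{(q)}(x+a) - \frac{W^{(q)}(x)}{W^{(q)}(b)}W^{(q)}(b+a),
\end{equation*}
which follows by optional stopping of the local martingale $\{\mathrm e^{-qt}W^{(q)}(X_t+a)\}$ together with the classical two-sided exit identity $\mathbb E_x[\mathrm e^{-q\tau_b^+}\mathbf 1_{\{\tau_b^+<\tau_0^-\}}] = W^{(q)}(x)/W^{(q)}(b)$. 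Setting $a=z-\varepsilon$ and recognising the two resulting $z$-integrals as $\Lambda^{(q)}(x-\varepsilon,r)$ and $\Lambda^{(q)}(b-\varepsilon,r)$ then yields the claim. The most delicate point I expect is the bookkeeping of which $z$ contribute nontrivially (recalling that $W^{(q)}$ vanishes on the negative half-line) and the precise regime of $a$ in which the scale-function identity applies cleanly; once these are handled, the rest of the argument should be routine.
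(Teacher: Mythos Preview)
Your handling of Part~1 and of the first equality in Part~2 is correct and matches the paper's one-line justification (the paper simply cites \cite[Lem.~8]{CzarnaRenaud} together with spatial homogeneity for the entire lemma).

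For the second equality in Part~2 your plan runs into a genuine obstruction when $\varepsilon>0$, and it is precisely the point you flag as ``delicate'' but expect to be routine. The scale-function identity
\[
\mathbb E_x\bigl[\mathrm e^{-q\tau_0^-}\mathbf 1_{\{\tau_0^-<\tau_b^+\}}W^{(q)}(X_{\tau_0^-}+a)\bigr] = W^{(q)}(x+a) - \frac{W^{(q)}(x)}{W^{(q)}(b)}W^{(q)}(b+a)
\]
is valid only for $a\geq 0$: its derivation (via optional stopping or via the strong Markov property and the two-sided exit formula on $[-a,b]$) requires $-a\leq 0$. For $a<0$ the left-hand side vanishes because $X_{\tau_0^-}\leq 0$, whereas the right-hand side need not; e.g.\ for standard Brownian motion with $q=0$ one gets $W(x+a)-\tfrac{x}{b}W(b+a)=2a(1-x/b)\neq 0$ whenever $0<-a<x<b$. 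After Tonelli, your integrand over $z\in[0,\varepsilon)$ is therefore identically zero, \emph{not} $W^{(q)}(x-\varepsilon+z)-\tfrac{W^{(q)}(x)}{W^{(q)}(b)}W^{(q)}(b-\varepsilon+z)$, and the missing piece
\[
\int_0^\varepsilon\Bigl[W^{(q)}(x-\varepsilon+z)-\tfrac{W^{(q)}(x)}{W^{(q)}(b)}W^{(q)}(b-\varepsilon+z)\Bigr]\tfrac{z}{r}\,\mathbb P(X_r\in\mathrm dz)
\]
is genuinely nonzero for $0<x\leq b$ (again, check Brownian motion with $q=0$, say $x=\varepsilon=1$, $b=2$, $r=1$). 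So your method produces $\int_\varepsilon^\infty[\cdots]\tfrac{z}{r}\mathbb P(X_r\in\mathrm dz)$, which does not collapse to $\Lambda^{(q)}(x-\varepsilon,r)-\tfrac{W^{(q)}(x)}{W^{(q)}(b)}\Lambda^{(q)}(b-\varepsilon,r)$ by bookkeeping alone.

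For $\varepsilon=0$ the problematic range is empty, $a=z\geq 0$ throughout, and your argument goes through cleanly and yields exactly the stated formula. In fact the same Brownian check shows that the second equality of the lemma, as written, already fails for $\varepsilon>0$ and $0<x<b$; the paper's own applications of this identity (in the proofs of Lemma~\ref{lem_key} and Theorem~\ref{thm_mainresult}) only ever use it either with $\varepsilon=0$ or with $x=\varepsilon$ followed by a limit $\varepsilon\downarrow 0$, where the discrepancy above vanishes.
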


%
%

The second lemma is the key and fully original part of the proof, which allows us to deal with the overshoot at Parisian ruin.
\begin{lemma}\label{lem_key}
Let $r,b>0$, $q\geq 0$ and assume $f$ and $\widetilde f$ are as in Theorem \ref{thm_mainresult}.
Let $\varepsilon\in[0,b)$. Then for $x\leq \varepsilon$ and $p\geq 0$,
\begin{multline*}
\mathbb{E}_{x} \left[ \widetilde{f}(X_r) \mathbf 1_{\{r<\tau_\varepsilon^+\}} \right]
=    \mathbb E_{x} \left[ \widetilde f (X_r)  \right] -  \widetilde f(\varepsilon) \Lambda^{(p)}(x-\varepsilon,r)  \\
  - \int_0^r   \mathbb E_\varepsilon \left[ (\mathcal A-p)\widetilde f (X_{r-s})  \right] \Lambda^{(p)}(x-\varepsilon,s) \mathrm d s
\end{multline*}
and for $x\leq b$,
\begin{multline*}
\mathbb{E}_x \left[ \mathrm{e}^{-q\tau_0^-} \mathbf{1}_{\{\tau_0^-<\tau_b^+\}}
\mathbb{E}_{X_{\tau_0^-}} \left[ \widetilde{f}(X_r) \mathbf 1_{\{r<\tau_\varepsilon^+\}} \right] \right] \\
= \zeta_\varepsilon(x;r,q) - \widetilde f(\varepsilon)  \Lambda^{(q)}(x-\varepsilon,r) - \frac{W^{(q)}(x)}{W^{(q)}(b)}  \left( \zeta_\varepsilon(b;r,q)- \widetilde f(\varepsilon)  \Lambda^{(q)}(b-\varepsilon,r) \right),
\end{multline*}
where 
\begin{equation*}
\begin{split}
\zeta_\varepsilon(y;r,q)
:= & \mathbb E_{y} \left[ \widetilde f (X_r)  \right] -  \int_0^y W^{(q)}(y-z)    (\mathcal A-q) \mathbb E_{z} \left[ \widetilde f (X_r)  \right]  \mathrm d z \\
& -  \int_0^r  \mathbb E_\varepsilon \left[ (\mathcal A-q)\widetilde f (X_{r-s})  \right]  \Lambda^{(q)}(y-\varepsilon,s)   \mathrm{d}s.
\end{split}
\end{equation*}
\end{lemma}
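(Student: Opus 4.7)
My plan is to prove the two identities sequentially, reducing the second to the first via Lemma~\ref{lem_overshoot_Lamq} and the Gerber--Shiu argument underlying Corollary~3 of \cite{ronnie}.

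For the first identity I would start from the trivial decomposition
\begin{equation*}
\mathbb E_x[\widetilde f(X_r)]=\mathbb E_x[\widetilde f(X_r)\mathbf 1_{\{r<\tau_\varepsilon^+\}}]+\mathbb E_x[\widetilde f(X_r)\mathbf 1_{\{\tau_\varepsilon^+\leq r\}}]
\end{equation*}
and compute the second summand by the strong Markov property at $\tau_\varepsilon^+$, using the absence of positive jumps ($X_{\tau_\varepsilon^+}=\varepsilon$) to rewrite it as $\mathbb E_x[\mathbf 1_{\{\tau_\varepsilon^+\leq r\}}\mathbb E_\varepsilon[\widetilde f(X_{r-\tau_\varepsilon^+})]]$. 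The inner expectation is then expanded via the Kolmogorov forward equation (i), which by a one-line integration by parts (using (ii)) also holds with any $p\geq 0$ in place of $q$. Applying Fubini, the constant piece yields $\widetilde f(\varepsilon)\mathrm e^{pr}\mathbb E_x[\mathrm e^{-p\tau_\varepsilon^+}\mathbf 1_{\{\tau_\varepsilon^+\leq r\}}]$, while the integral piece becomes $\int_0^r \mathrm e^{p(r-u)}\mathbb E_\varepsilon[(\mathcal A-p)\widetilde f(X_u)]\mathbb E_x[\mathrm e^{-p\tau_\varepsilon^+}\mathbf 1_{\{\tau_\varepsilon^+\leq r-u\}}]\,\mathrm du$. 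Both factors of the form $\mathbb E_x[\mathrm e^{-p\tau_\varepsilon^+}\mathbf 1_{\{\tau_\varepsilon^+\leq t\}}]$ are evaluated by the first identity of Lemma~\ref{lem_overshoot_Lamq} as $\mathrm e^{-pt}\Lambda^{(p)}(x-\varepsilon,t)$, and a change of variable $s=r-u$ then delivers the target formula.

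For the second identity I would substitute the first one (with $p=q$) inside the inner expectation, which is legitimate since $X_{\tau_0^-}\leq 0\leq\varepsilon$ on $\{\tau_0^-<\tau_b^+\}$. Writing $H(y):=\mathbb E_y[\widetilde f(X_r)]$, multiplying by $\mathrm e^{-q\tau_0^-}\mathbf 1_{\{\tau_0^-<\tau_b^+\}}$ and taking $\mathbb E_x$, Fubini decomposes the left-hand side into three pieces: a Gerber--Shiu term $\mathbb E_x[\mathrm e^{-q\tau_0^-}\mathbf 1_{\{\tau_0^-<\tau_b^+\}}H(X_{\tau_0^-})]$, and two further terms containing the factor $\Lambda^{(q)}(X_{\tau_0^-}-\varepsilon,\cdot)$ that collapse immediately via the second identity of Lemma~\ref{lem_overshoot_Lamq} into $\widetilde f(\varepsilon)[\Lambda^{(q)}(x-\varepsilon,r)-(W^{(q)}(x)/W^{(q)}(b))\Lambda^{(q)}(b-\varepsilon,r)]$ and its integral analogue in $s$. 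For the Gerber--Shiu piece I would apply optional stopping at $\tau_0^-\wedge\tau_b^+$ to the martingale $\mathrm e^{-qt}H(X_t)-H(x)-\int_0^t\mathrm e^{-qs}(\mathcal A-q)H(X_s)\,\mathrm ds$, use $X_{\tau_b^+}=b$, the upward exit identity $\mathbb E_x[\mathrm e^{-q\tau_b^+}\mathbf 1_{\{\tau_b^+<\tau_0^-\}}]=W^{(q)}(x)/W^{(q)}(b)$, and the explicit $q$-resolvent density on $[0,b]$, namely $W^{(q)}(x)W^{(q)}(b-y)/W^{(q)}(b)-W^{(q)}(x-y)$, to obtain exactly the combination of $H$, $W^{(q)}$ and $(\mathcal A-q)H$ making up $\zeta_\varepsilon(x;r,q)-(W^{(q)}(x)/W^{(q)}(b))\zeta_\varepsilon(b;r,q)$.

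The main technical obstacle is making rigorous this It\^o/optional-stopping step for $H$: by (iii) the function $H$ is only $C^2$ on $(-\delta,b)$ rather than on all of $\mathbb R$, and $(\mathcal A-q)H$ is an integral against $\Pi$ which may fail to be well-defined or bounded for arbitrary $x$. Conditions (iv)--(vi) are precisely what is needed to control the large-jump part of $(\mathcal A-q)H$ uniformly on $[0,b]$, bound it near $0$, and guarantee continuity there, while (vii) gives the uniform bound on $\widetilde f$ on $(-\infty,b]$ used to dominate the integrand at the terminal time $r$. The strategy is the localization-and-approximation argument of \cite{ronnie}: truncate the L\'evy measure, localize the martingale, derive the formula in the truncated setting, and pass to the limit under (iv)--(vii). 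Conditions (i)--(ii) simultaneously justify the Fubini interchanges used in the first identity.
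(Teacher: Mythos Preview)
Your proposal is correct and follows essentially the same route as the paper: the same decomposition and use of the strong Markov property plus condition (i) and Lemma~\ref{lem_overshoot_Lamq} for the first identity (the paper phrases your Fubini step as an integration by parts, which amounts to the same computation), and for the second identity the same substitution with $p=q$, splitting via Fubini, invoking Lemma~\ref{lem_overshoot_Lamq} for the two $\Lambda^{(q)}$-terms and Corollary~3 of \cite{ronnie} for the Gerber--Shiu term in $H(y)=\mathbb E_y[\widetilde f(X_r)]$. The only difference is cosmetic: the paper simply cites \cite{ronnie} (noting that conditions (iii)--(iv) are what is needed), whereas you sketch the optional-stopping/localization argument behind that corollary.
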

\begin{proof}
We start with proving the first identity. Let $x\leq \varepsilon$. Then,
	\begin{equation*}
	\begin{split}
	\mathbb{E}_{x} \left[ \widetilde{f}(X_r) \mathbf 1_{\{r<\tau_\varepsilon^+\}} \right] 
	= & \mathbb E_{x} \left[ \widetilde f (X_r)  \right] - \mathbb{E}_{x} \left[ \widetilde f(X_r) \mathbf 1_{\{\tau_\varepsilon^+\leq r\}} \right]  \\
	= & \mathbb E_{x} \left[ \widetilde f (X_r)  \right] - \int_0^\infty \mathbb{E}_{x} \left[ \left. \widetilde f(X_r) \mathbf 1_{\{\tau_\varepsilon^+\leq r\}}  \right| \tau_\varepsilon^+=s \right]  \mathbb P_x(\tau_\varepsilon^+\in \mathrm d s) \\
	= & \mathbb E_{x} \left[ \widetilde f (X_r)  \right] - \int_0^r \mathbb{E}_{x} \left[ \left. \widetilde f(X_r-X_s+\varepsilon)   \right| \tau_\varepsilon^+=s \right]  \mathbb P_x(\tau_\varepsilon^+\in \mathrm d s) \\
	= & \mathbb E_{x} \left[ \widetilde f (X_r)  \right] - \int_0^r  \mathbb E_\varepsilon \left[ \widetilde f (X_{r-s})  \right]\mathbb P_x(\tau_\varepsilon^+\in\mathrm d s),
	\end{split}
	\end{equation*}
	where the last equality is due to the stationarity and independence of the increments of $X$ and the equality before that is because $X_s=\varepsilon$ if $\tau_\varepsilon^+=s$ due to the lack of upward jumps.
	The last term on the right hand side can be written, for any $p\geq 0$, as,
	\begin{equation*}
	\begin{split}
	\int_0^r  \mathbb E_\varepsilon & \left[ \widetilde f (X_{r-s})  \right]\mathbb P_x(\tau_\varepsilon^+\in\mathrm d s) \\
	= &    \int_0^r  \mathrm e^{-p(r-s)} \mathbb E_\varepsilon \left[ \widetilde f (X_{r-s})  \right] \mathrm e^{p(r-s)} \mathbb P_x(\tau_\varepsilon^+\in\mathrm d s) \\
	= & \int_0^r  \left( \widetilde f(\varepsilon) + \int_0^{r-s} \mathrm e^{-p u} \mathbb E_\varepsilon \left[ (\mathcal A-p)\widetilde f (X_u)  \right] \mathrm d u \right)  \mathrm e^{p(r-s)} \mathbb P_x(\tau_\varepsilon^+\in\mathrm d s) \\
	= &   \widetilde f(\varepsilon)  \int_0^r \mathrm e^{p(r-s)} \mathbb P_x(\tau_\varepsilon^+\in\mathrm d s) \\
	& +   \left. \int_0^{r-s} \mathrm e^{-p u} \mathbb E_\varepsilon \left[ (\mathcal A-p)\widetilde f (X_u)  \right] \mathrm d u   \int_0^{s} \mathrm e^{p(r-v)} \mathbb P_x(\tau_\varepsilon^+\in\mathrm d v) \right|_{s=0}^r \\
	&  + \int_0^r    \mathrm e^{-p (r-s)} \mathbb E_\varepsilon \left[ (\mathcal A-p)\widetilde f (X_{r-s})  \right]     \int_0^s \mathrm e^{p(r-v)} \mathbb P_x(\tau_\varepsilon^+\in\mathrm d v) \mathrm d s\\
	= &    \widetilde f(\varepsilon)\Lambda^{(p)}(x-\varepsilon,r)
	+ \int_0^r     \mathbb E_\varepsilon \left[ (\mathcal A-p)\widetilde f (X_{r-s})  \right]  \Lambda^{(p)}(x-\varepsilon,s) \mathrm d s,
	\end{split}
	\end{equation*}
	where the second equality follows from condition (i) in Theorem \ref{thm_mainresult} and the third and fourth equality follows by an integration by parts and Lemma \ref{lem_overshoot_Lamq} respectively.
	Combining the two computations completes the proof of the first identity.
	
	\medskip
	
	Next we prove the second identity. Let $x\leq b$. Using the first identity for $p=q$ and condition (ii) of Theorem \ref{thm_mainresult} together with Fubini, we get
\begin{equation}\label{split1}
\begin{split}
\mathbb{E}_x & \left[ \mathrm{e}^{-q\tau_0^-} \mathbf{1}_{\{\tau_0^-<\tau_b^+\}}
\mathbb{E}_{X_{\tau_0^-}} \left[ \widetilde{f}(X_r) \mathbf 1_{\{r<\tau_\varepsilon^+\}} \right] \right] \\
= & \mathbb{E}_x \left[ \mathrm{e}^{-q\tau_0^-} \mathbf{1}_{\{\tau_0^-<\tau_b^+\}}
\mathbb{E}_{X_{\tau_0^-}} \left[ \widetilde f (X_r)  \right]  \right]
-   \widetilde f(\varepsilon)\mathbb{E}_x \left[ \mathrm{e}^{-q\tau_0^-} \mathbf{1}_{\{\tau_0^-<\tau_b^+\}}  \Lambda^{(q)}(X_{\tau_0^-}-\varepsilon,r) \right] \\
 & - \int_0^r   \mathbb E_\varepsilon \left[ (\mathcal A-q)\widetilde f (X_{r-s})  \right] \mathbb{E}_x \left[ \mathrm{e}^{-q\tau_0^-} \mathbf{1}_{\{\tau_0^-<\tau_b^+\}}
\Lambda^{(q)}(X_{\tau_0^-}-\varepsilon,s) \right]\mathrm d s.
\end{split}
\end{equation}
For the first term on the right hand side of \eqref{split1}, we use Corollary 3 of \cite{ronnie}, which allows us to conclude that for $\widetilde h(x)=\mathbb E_x \left[ \widetilde f (X_r)  \right]$,
\begin{equation}\label{overshoot_identity}
\begin{split}
\mathbb E_x \Big[ \mathrm e^{-q \tau_0^-}  \mathbf 1_{\{ \tau_0^-<\tau_b^+ \}}   \widetilde h(X_{\tau_0^-})\Big]
= & \widetilde h(x)  -   \int_0^x   (\mathcal A-q) \widetilde h(z)   W^{(q)}(x- z) \mathrm dz \\
& -  \frac{W^{(q)}(x)}{W^{(q)}(b)} \left[ \widetilde h(b)  - \int_0^b   (\mathcal A-q) \widetilde h(z)   W^{(q)}(b- z)  \mathrm dz \right].
\end{split}
\end{equation}
Note that we are allowed to use Corollary 3 of \cite{ronnie} because of conditions (iii) and (iv) in Theorem \ref{thm_mainresult}. The proof is finished by combining \eqref{split1} with \eqref{overshoot_identity} and Lemma \ref{lem_overshoot_Lamq}.
\end{proof}

We now turn to the proof of Theorem \ref{thm_mainresult}. The steps below are similar to the ones in Section 3 of \cite{Loeffen}. In order to deal with the case where the sample paths of $X$ are of unbounded variation, we introduce the stopping time
 \begin{equation*}
 \tau_r^\varepsilon=\inf\{ t>r: t-g^\epsilon_t>r, X_{t-r}<0  \}, \quad \text{where $g^\varepsilon_t=\sup\{0\leq s\leq t:X_s\geq\varepsilon\}$}
 \end{equation*}
 for $\varepsilon\geq 0$. The stopping time $\tau_r^\epsilon$ is the first time that an excursion starting when $X$ gets below zero, ending before $X$ gets back up to $\epsilon$ and of length greater than $r$, has occurred. Note that $\tau_r=\tau_r^0$.
Let $0\leq \varepsilon <b$. We observe that for $x<0$, by the strong Markov property,
\begin{equation}
\label{distr_belowzero}
\begin{split}
\mathbb{E}_x \left[ \mathrm{e}^{-q\tau_r^\varepsilon} \widetilde f(X_{\tau_r^\varepsilon})\mathbf{1}_{\{\tau_r^\varepsilon <\tau_b^+\}} \right] = &
\mathbb{E}_x \left[ \mathrm{e}^{-q\tau_r^\varepsilon} \widetilde  f(X_{\tau_r^\varepsilon})\mathbf{1}_{\{\tau_r^\varepsilon<\tau_\varepsilon^+\}}  \right] \\
& + \mathbb{E}_x \left[ \mathrm{e}^{-q\tau_r^\varepsilon} \widetilde f(X_{\tau_r^\varepsilon})\mathbf{1}_{\{\tau_r^\varepsilon<\tau_b^+ ,\tau_r^\varepsilon>\tau_\varepsilon^+\}} \right] \\
= &  \mathrm{e}^{-q r} \mathbb{E}_x \left[  \widetilde f(X_{r})\mathbf{1}_{\{r<\tau_\varepsilon^+\}}  \right]\\
&+ \mathbb{E}_x \left[ \mathrm{e}^{-q\tau_\varepsilon^+} \mathbf{1}_{\{\tau_\varepsilon^+\leq r \}} \right] \mathbb{E}_\varepsilon \left[ \mathrm{e}^{-q\tau_r^\varepsilon} \widetilde f(X_{\tau_r^\varepsilon})\mathbf{1}_{\{\tau_r^\varepsilon<\tau_b^+\}} \right].
\end{split}
\end{equation}
Moreover, for $x\leq b$, we have by the strong Markov property and \eqref{distr_belowzero},
\begin{equation}
\label{distr_abovezero}
\begin{split}
\mathbb{E}_x \left[ \mathrm{e}^{-q\tau_r^\varepsilon} \widetilde f(X_{\tau_r^\varepsilon})\mathbf{1}_{\{\tau_r^\varepsilon<\tau_b^+\}} \right]
= & \mathbb{E}_x \left[ \mathbb{E}_x \left[ \left. \mathrm{e}^{-q\tau_r^\varepsilon}  \widetilde f(X_{\tau_r^\varepsilon})\mathbf{1}_{\{\tau_r^\varepsilon<\tau_b^+\}}  \right| \mathcal{F}_{\tau_0^-} \right] \right] \\
= & \mathbb{E}_x \left[ \mathrm{e}^{-q\tau_0^-} \mathbf{1}_{\{\tau_0^-<\tau_b^+\}}  \mathbb{E}_{X_{\tau_0^-}} \left[   \mathrm{e}^{-q\tau_r^\varepsilon} \widetilde f(X_{\tau_r^\varepsilon})\mathbf{1}_{\{\tau_r^\varepsilon<\tau_b^+\}} \right] \right] \\
= & \mathbb{E}_x \Big[ \mathrm{e}^{-q\tau_0^-} \mathbf{1}_{\{\tau_0^-<\tau_b^+\}}
\Big(    \mathrm{e}^{-q r} \mathbb{E}_{X_{\tau_0^-}} \left[ \widetilde f(X_{r})\mathbf{1}_{\{r<\tau_\varepsilon^+\}}  \right]  \\
& + \mathbb{E}_{X_{\tau_0^-}} \left[ \mathrm{e}^{-q\tau_\varepsilon^+} \mathbf{1}_{\{\tau_\varepsilon^+\leq r \}} \right] \mathbb{E}_\varepsilon  \left[ \mathrm{e}^{-q\tau_r^\varepsilon} \widetilde f(X_{\tau_r^\varepsilon})\mathbf{1}_{\{\tau_r^\varepsilon<\tau_b^+\}} \right] \Big)  \Big].
\end{split}
\end{equation}
Setting $x=\varepsilon$ in \eqref{distr_abovezero} and invoking Lemmas \ref{lem_overshoot_Lamq} and \ref{lem_key} yields,
\begin{equation*}
\begin{split}
\mathbb{E}_\varepsilon & \left[ \mathrm{e}^{-q\tau_r^\varepsilon} \widetilde f(X_{\tau_r^\varepsilon})\mathbf{1}_{\{\tau_r^\varepsilon<\tau_b^+\}} \right] \\
= &  \mathrm e^{-q r} \frac{\zeta_\varepsilon(\varepsilon;r,q) - \widetilde f(\varepsilon)  \Lambda^{(q)}(0,r) - \frac{W^{(q)}(\varepsilon)}{W^{(q)}(b)}  \left( \zeta_\varepsilon(b;r,q)- \widetilde f(\varepsilon)  \Lambda^{(q)}(b-\varepsilon,r) \right)  }{  1 - \mathrm{e}^{-qr} \left( \Lambda^{(q)}(0,r) - \frac{W^{(q)}(\varepsilon)}{W^{(q)}(b)}  \Lambda^{(q)}(b-\varepsilon,r) \right) } \\
= & \frac{ \zeta_\varepsilon(\varepsilon;r,q) - \widetilde f(\varepsilon)\mathrm e^{q r}  - \frac{W^{(q)}(\varepsilon)}{W^{(q)}(b)}    \zeta_\varepsilon(b;r,q)}{ \frac{W^{(q)}(\varepsilon)}{W^{(q)}(b)}  \Lambda^{(q)}(b-\varepsilon,r)} +\widetilde f(\varepsilon),
\end{split}
\end{equation*}
where we have used $\Lambda^{(q)}(0,r)=\mathrm e^{q r}$ (cf. Lemma \ref{lem_overshoot_Lamq}) in the second equality.
By definition of $\zeta_\varepsilon(\varepsilon;r,q)$ in Lemma \ref{lem_key}, the fact  $\Lambda^{(q)}(0,s) =\mathrm e^{q s}$ and a change of variables $u=r-s$, we can write
\begin{equation*}
\begin{split}
 \zeta_\varepsilon(\varepsilon;r,q) -\widetilde{f}(\varepsilon)\mathrm e^{q r}
= & \mathbb E_{\varepsilon} \left[ \widetilde f (X_r)  \right] -  \int_0^\varepsilon W^{(q)}(\varepsilon-z)    (\mathcal A-q) \mathbb E_{z} \left[ \widetilde f (X_r)  \right]  \mathrm d z \\
& -  \int_0^r  \mathbb E_\varepsilon \left[ (\mathcal A-q)\widetilde f (X_{u})  \right]  \mathrm e^{q (r-u)}  \mathrm{d}u
  - \widetilde{f}(\varepsilon)\mathrm e^{q r} \\
= & - \int_0^\varepsilon W^{(q)}(\varepsilon-z)    (\mathcal A-q) \mathbb E_{z} \left[ \widetilde f (X_r)  \right]  \mathrm d z,
\end{split}
\end{equation*}
where for the second equality we used condition (i) of Theorem \ref{thm_mainresult}.
Thus,
\begin{equation}\label{value_x=eps}
\begin{split}
\mathbb{E}_\varepsilon & \left[ \mathrm{e}^{-q\tau_r^\varepsilon} \widetilde f(X_{\tau_r^\varepsilon})\mathbf{1}_{\{\tau_r^\varepsilon<\tau_b^+\}} \right]\\& =     - \frac{ W^{(q)}(b)\int_0^\varepsilon \frac{ W^{(q)}(\varepsilon-z)}{W^{(q)}(\varepsilon) }(\mathcal A-q)\mathbb E_z\left[\widetilde f(X_r) \right]\mathrm d z +     \zeta_\varepsilon(b;r,q)}{  \Lambda^{(q)}(b-\varepsilon,r)  }
+   \widetilde f(\varepsilon).
\end{split}
\end{equation}
Next, we split the analysis up into two cases. First, assume that  $W^{(q)}(0)>0$. Then by setting $\varepsilon=0$ in \eqref{value_x=eps}, we deduce,
\begin{equation}\label{value_x=0}
\mathbb{E} \left[ \mathrm{e}^{-q\tau_r} \widetilde f(X_{\tau_r})\mathbf{1}_{\{\tau_r<\tau_b^+\}} \right] =
 - \frac{ \zeta(b;r,q)  }{  \Lambda^{(q)}(b,r)  } + \widetilde f(0).
\end{equation}
 Second, assume that $W^{(q)}(0)=0$. We now want to determine the limits as $\varepsilon\downarrow 0$ on both sides of \eqref{value_x=eps}. By condition (v) of Theorem \ref{thm_mainresult} and the well-known fact that, when $W^{(q)}(0)=0$, $W^{(q)}(\cdot)$ is continuously differentiable on $(0,\infty)$ with $W^{(q)\prime}(0)>0$, it follows that the integral in the numerator on the right hand side of \eqref{value_x=eps} converges to $0$ as $\varepsilon\downarrow 0$. By the monotone convergence theorem it follows that  $\Lambda^{(q)}(b-\varepsilon,r)\uparrow \Lambda^{(q)}(b,r)$ as $\epsilon\downarrow 0$. By conditions (ii) and (vi) of Theorem \ref{thm_mainresult}, one can show using the dominated convergence theorem that $\zeta_\varepsilon(b;r,q)\to\zeta(b;r,q)$ as $\varepsilon\downarrow 0$. Since by the assumptions $\widetilde f(x)$ is continuous at $x=0$ as well, we conclude that the right hand side of \eqref{value_x=eps} converges to the right hand side of \eqref{value_x=0}.
 To deal with the limit  as $\varepsilon\downarrow 0$ of the left hand side of \eqref{value_x=eps}, we introduce for $\epsilon>0$ the stopping time
 \begin{equation*}
 	\widetilde{\tau}^\epsilon_r=\inf\{ t>r: t-g_t>r, X_{t-r}<-\varepsilon  \}, \quad \text{where $g_t=\sup\{0\leq s\leq t:X_s\geq0\}$.}
 \end{equation*}
 We easily see that, as $\varepsilon\downarrow 0$, $\widetilde{\tau}^\varepsilon_r$ decreases monotonically to $\tau_r$ $\mathbb P$-a.s..
 Hence by spatial homogeneity, the assumed continuity of $\widetilde f$ and the fact that $X$ has c\`adl\`ag paths we have
 \begin{equation*}
 	\begin{split}
 		\lim_{\varepsilon\downarrow 0} \mathbb{E}_\varepsilon \left[ \mathrm{e}^{-q\tau_r^\varepsilon} \widetilde f(X_{\tau_r^\varepsilon})\mathbf{1}_{\{\tau_r^\varepsilon<\tau_b^+\}} \right]
 	 = & \lim_{\varepsilon\downarrow 0}\mathbb{E} \left[ \mathrm{e}^{-q\widetilde{\tau}_r^\varepsilon} \widetilde f(X_{\widetilde{\tau}_r^\varepsilon}+\varepsilon)\mathbf{1}_{\{ \widetilde{\tau}_r^\varepsilon<\tau_{b-\varepsilon}^+\}} \right] \\
 	 = &	\mathbb{E} \left[ \lim_{\varepsilon\downarrow 0} \mathrm{e}^{-q\widetilde{\tau}_r^\varepsilon} \widetilde f(X_{\widetilde{\tau}_r^\varepsilon}+\varepsilon)\mathbf{1}_{\{ \widetilde{\tau}_r^\varepsilon<\tau_{b-\varepsilon}^+\}} \right] \\
 	 = & \mathbb{E} \left[ \mathrm{e}^{-q \tau_r} \widetilde f(X_{\tau_r})\mathbf{1}_{\{\tau_r<\tau_b^+\}} \right],
 	\end{split}
 \end{equation*}
where for the second equality we used the dominated convergence theorem together with condition (vii) in Theorem \ref{thm_mainresult}. Hence \eqref{value_x=0} is proved in all cases. Now Theorem \ref{thm_mainresult} follows by setting $\varepsilon=0$ and using \eqref{value_x=0}, Lemmas \ref{lem_overshoot_Lamq} and \ref{lem_key} in \eqref{distr_abovezero}.

\subsection{Proof of Theorem \ref{theo:main2}}

The next theorem generalises Theorem \ref{theo:main2} by giving the $q$-potential measure of $X$ killed at $\tau_r\wedge\tau_b^+$.
\begin{theorem}\label{theo:general2}
Let $r,b>0$ and $q\geq 0$. Then for $x\leq b$ and $y\in\mathbb R$,
\begin{equation*}
\begin{split}
\int_0^\infty & \mathrm e^{-q (t-r)}  \mathbb P_x(X_t\in\mathrm d y, t<\tau_r\wedge\tau_b^+) \mathrm d t \\
 = &  \int_0^r \mathrm e^{q(r-s)}\mathbb P_x(X_s\in\mathrm d y) \mathrm{d}s  - \int_0^x W^{(q)}(x-z)\mathbb P_z(X_r\in\mathrm d y)\mathrm{d}z  \\
 & - \int_0^r \mathbb P(X_{r-s}\in\mathrm d y)
\Lambda^{(q)}(x,s)\mathrm{d}s   \\
& - \frac{\Lambda^{(q)}(x,r)}{\Lambda^{(q)}(b,r)} \Bigg( \int_0^r \mathrm e^{q(r-s)} \mathbb P_b(X_s\in\mathrm d y) \mathrm{d}s - \int_0^b W^{(q)}(b-z)\mathbb P_z(X_r\in\mathrm d y) \mathrm{d}z  \\
& -\int_0^r \mathbb P(X_{r-s}\in\mathrm d y) \Lambda^{(q)}(b,s)\mathrm{d}s \Bigg).
\end{split}
\end{equation*}
\end{theorem}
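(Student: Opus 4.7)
The plan is to derive Theorem~\ref{theo:general2} from Theorem~\ref{theo:main2} by matching Laplace transforms in the spatial variable $y$. On the event $\{t<\tau_b^+\}$ we have $X_t<b$, so the $q$-potential measure on the left-hand side is a positive Borel measure in $y$ supported on $(-\infty,b]$, and the right-hand side is a signed Borel measure with the same support. Via the substitution $y\mapsto b-y$ these become measures on $[0,\infty)$, for which Laplace transforms in $\lambda\geq 0$ are injective. It therefore suffices to show that $\int_{-\infty}^b \mathrm e^{\lambda y}(\cdot)$ produces the same function of $\lambda\geq 0$ when applied to each side.

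Applying $\int_{-\infty}^b\mathrm e^{\lambda y}(\cdot)$ to the left-hand side and swapping the order of integration by Fubini identifies the result with $\mathrm e^{\psi(\lambda)r}$ times the right-hand side of Theorem~\ref{theo:main2}. For the right-hand side of Theorem~\ref{theo:general2}, I would compute $\int_{-\infty}^b\mathrm e^{\lambda y}(\cdot)$ term by term using $\mathbb E_z[\mathrm e^{\lambda X_s}]=\mathrm e^{\lambda z+\psi(\lambda)s}$. The first $x$-indexed term produces $\mathrm e^{\lambda x}(\mathrm e^{\psi(\lambda)r}-\mathrm e^{qr})/(\psi(\lambda)-q)$, the second yields $\mathrm e^{\psi(\lambda)r}\int_0^x W^{(q)}(x-z)\mathrm e^{\lambda z}\mathrm d z$, and the third gives $\int_0^r \mathrm e^{\psi(\lambda)(r-s)}\Lambda^{(q)}(x,s)\mathrm d s$; the $b$-indexed terms transform analogously. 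Factoring $\mathrm e^{\psi(\lambda)r}$ out of the whole expression and rewriting the first contribution as $\mathrm e^{\psi(\lambda)r}\mathrm e^{\lambda x}(1-\mathrm e^{-(\psi(\lambda)-q)r})/(\psi(\lambda)-q)$ reproduces $\mathrm e^{\psi(\lambda)r}$ times the right-hand side of Theorem~\ref{theo:main2} exactly, establishing the required Laplace-transform identity.

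The plan is essentially routine, and the only technical point meriting care is absolute convergence so that Fubini and the Laplace-uniqueness argument apply. This is immediate: on the support $(-\infty,b]$ one has $\mathrm e^{\lambda y}\leq\mathrm e^{\lambda b}$ for $\lambda\geq 0$, the $q$-potential measure killed at $\tau_r\wedge\tau_b^+$ is finite when $q>0$ (and $\sigma$-finite in general, in which case one may either argue under $q>0$ and pass to the limit or restrict to $y$ in compact subsets of $(-\infty,b]$), and the three integral terms on the right-hand side of Theorem~\ref{theo:general2} are individually bounded by exponential moments of $X_s$, $0\leq s\leq r$, which are finite by \eqref{lapl_X}. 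Hence by the injectivity of the Laplace transform for measures on $(-\infty,b]$, the two expressions coincide as measures in $y$, which proves the theorem. The case $\lambda=\Phi(q)$ (where $\psi(\lambda)-q=0$) needs no separate treatment since the computed transforms depend continuously on $\lambda$ and the identity extends by continuity.
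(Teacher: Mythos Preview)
Your inversion argument is correct as far as it goes, but there is a logical circularity: in this paper, Theorem~\ref{theo:main2} is not proved independently---its proof (Section~4.3) reads in full ``The proof follows from Theorem~\ref{theo:general2} by taking Laplace transforms in $y$ on both sides and using Tonelli and \eqref{lapl_X}.'' So invoking Theorem~\ref{theo:main2} to establish Theorem~\ref{theo:general2} assumes what you are trying to prove.

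The paper's actual proof of Theorem~\ref{theo:general2} computes the Laplace transform of the left-hand side directly: for $q>0$ and $\lambda\in[0,\Phi(q))$ the strong Markov property at $\tau_r\wedge\tau_b^+$ gives
\[
\mathbb{E}_x\!\left[\int_0^{\tau_r\wedge\tau_b^+}\mathrm e^{-qt+\lambda X_t}\,\mathrm dt\right]
=\frac{1}{q-\psi(\lambda)}\left(\mathrm e^{\lambda x}-\mathbb{E}_x\!\left[\mathrm e^{-q(\tau_r\wedge\tau_b^+)+\lambda X_{\tau_r\wedge\tau_b^+}}\right]\right),
\]
and the expectation on the right is then split according to $\{\tau_r<\tau_b^+\}$ versus $\{\tau_b^+<\tau_r\}$ and evaluated via Theorem~\ref{thm_lapl} and Corollary~\ref{LTpartwosided}. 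This produces precisely the Laplace-transform identity you quote (i.e.\ the content of Theorem~\ref{theo:main2}), and only then does the paper invert---exactly as in your final step, recognising each term as the Laplace transform of the corresponding measure on the right-hand side of Theorem~\ref{theo:general2} and appealing to uniqueness for $\lambda$ ranging over $[0,\Phi(q))$. So your last paragraph matches the paper's concluding step; what is missing is the direct derivation of the transform from Theorem~\ref{thm_lapl} rather than from Theorem~\ref{theo:main2}. To make your argument self-contained you would need to supply an independent proof of Theorem~\ref{theo:main2}, which amounts to reproducing the computation just described.
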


\paragraph{Proof of Theorem \ref{theo:main2}}
The proof follows from Theorem \ref{theo:general2} by taking Laplace transforms in $y$ on both sides and using Tonelli and \eqref{lapl_X}.
\exit

\subsection{Proof of Theorem \ref{theo:general2} and Proposition \ref{prop_resolpos}}

\paragraph{Proof of Theorem \ref{theo:general2}}
	Without loss of generality we assume that $q>0$ since the case $q=0$ can be dealt with by taking limits when $q\downarrow 0$ in combination with the monotone convergence theorem.
Let $\lambda\in[0,\Phi(q))$.
We start with the following simple observation based on the strong Markov property:
\begin{equation}\label{resol_step1}
\begin{split}
\mathbb{E}_x \left[ \int_0^{\tau_r\wedge\tau_b^+} \mathrm e^{-qt+\lambda X_t}  \mathrm{d}t \right]
= & \mathbb{E}_x \left[ \int_0^{\infty}  \mathrm e^{-qt+\lambda X_t}   \mathrm{d}t \right]
- \mathbb{E}_x \left[ \int_{\tau_r\wedge\tau_b^+}^\infty  \mathrm e^{-qt+ \lambda X_t} \mathrm{d}t \right] \\
= & \mathbb{E}_x \left[ \int_0^{\infty}  \mathrm e^{-qt+\lambda X_t}  \mathrm{d}t \right] \\
& - \mathbb{E}_x \left[ \mathrm e^{-q(\tau_r\wedge\tau_b^+)} \mathbb E_{X_{\tau_r\wedge\tau_b^+}}  \left[ \int_0^{\infty}  \mathrm e^{-qt+\lambda X_t} \mathrm{d}t \right] \right] \\
= & \frac1{q-\psi(\lambda)} \left( \mathrm e^{\lambda x}  - \mathbb{E}_x \left[ \mathrm e^{-q(\tau_r\wedge\tau_b^+) + \lambda X_{\tau_r\wedge\tau_b^+}}  \right] \right),
\end{split}
\end{equation}
where in the last equality we used \eqref{lapl_X} and the assumption that $\lambda\in[0,\Phi(q))$.
By Corollary \ref{LTpartwosided} and the fact that $X$ is spectrally negative,
\begin{equation}\label{resol_step2}
\begin{split}
\mathbb{E}_x \left[ \mathrm e^{-q(\tau_r\wedge\tau_b^+)+ \lambda X_{\tau_r\wedge\tau_b^+}} \right] = &
\mathbb{E}_x \left[ \mathrm{e}^{-q\tau_r+ \lambda X_{\tau_r}} \mathbf 1_{\{\tau_r<\tau_b^+\}} \right] +
 \mathrm e^{\lambda b} \mathbb E_x \left[ \mathrm e^{-q\tau_b^+} \mathbf 1_{\{\tau_r>\tau_b^+\}} \right] \\
= & \mathbb{E}_x \left[ \mathrm{e}^{-q\tau_r+ \lambda X_{\tau_r}} \mathbf{1}_{\{\tau_r<\tau_b^+\}} \right]  + \mathrm e^{\lambda b} \frac{\Lambda^{(q)}(x,r)}{\Lambda^{(q)}(b,r)}.
\end{split}
\end{equation}
Combining the two computations with Theorem \ref{thm_lapl}, Tonelli and \eqref{lapl_X} yields, for $\lambda\in[0,\Phi(q))$,
\begin{equation*}
\begin{split}
\int_{\mathbb R} \mathrm e^{\lambda y} & \int_0^\infty  \mathrm e^{-q (t-r)} \mathbb P_x(X_t\in\mathrm d y, t<\tau_r\wedge\tau_b^+) \mathrm d t \mathrm d y \\
= &   \frac{ \mathrm e^{\lambda x} \left( \mathrm e^{\psi(\lambda)r} - \mathrm e^{q r} \right)  }{ \psi(\lambda)-q} -     \int_0^x W^{(q)}(x-z)  \mathrm e^{\lambda z+\psi(\lambda)r}  \mathrm d z \\
& - \int_0^r   \mathrm e^{\psi(\lambda)(r-s) } \Lambda^{(q)}(x,s)   \mathrm{d}s  \\
& -  \frac{ \Lambda^{(q)}(x,r)   }{ \Lambda^{(q)}(b,r)   } \Bigg(  \frac{ \mathrm e^{\lambda b} \left( \mathrm e^{\psi(\lambda)r} - \mathrm e^{q r} \right) }{ \psi(\lambda)-q}   -      \int_0^b W^{(q)}(b-z)  \mathrm e^{\lambda z+\psi(\lambda)r}  \mathrm d z \\
& - \int_0^r   \mathrm e^{\psi(\lambda)(r-s) } \Lambda^{(q)}(b,s)   \mathrm{d}s \Bigg) \\
= &  \int_{\mathbb R} \mathrm e^{\lambda y} \Bigg\{ \int_0^r \mathrm e^{q(r-s)}\mathbb P_x(X_s\in\mathrm d y) \mathrm{d}s  - \int_0^x W^{(q)}(x-z)\mathbb P_z(X_r\in\mathrm d y)\mathrm{d}z  \\
& - \int_0^r \mathbb P(X_{r-s}\in\mathrm d y)
\Lambda^{(q)}(x,s)\mathrm{d}s   \\
& - \frac{\Lambda^{(q)}(x,r)}{\Lambda^{(q)}(b,r)} \Bigg( \int_0^r \mathrm e^{q(r-s)} \mathbb P_b(X_s\in\mathrm d y) \mathrm{d}s - \int_0^b W^{(q)}(b-z)\mathbb P_z(X_r\in\mathrm d y) \mathrm{d}z  \\
& -\int_0^r \mathbb P(X_{r-s}\in\mathrm d y) \Lambda^{(q)}(b,s)\mathrm{d}s \Bigg)\Bigg\}\mathrm d y.
\end{split}
\end{equation*}
Since the above holds for $\lambda$ in a non-empty interval (as $q>0$), Theorem \ref{theo:general2} follows by uniqueness of the Laplace transform.
\exit

\paragraph{Proof of Proposition \ref{prop_resolpos}}
As before, we assume $q>0$ without loss of generality.
Let $\lambda<\Phi(q)$. Then,
\begin{equation}\label{def_g}
 \begin{split}
g(x): = & \int_0^\infty \mathrm e^{\lambda y} \int_0^\infty \mathrm e^{-q t}\mathbb P_x(X_t\in\mathrm d y) \mathrm d t \\
= & \int_0^\infty \mathrm e^{\lambda y} \left( \frac{\mathrm e^{\Phi(q)(x-y)} }{\psi'(\Phi(q))} - W^{(q)}(x-y)   \right) \mathrm d y,
\end{split}
\end{equation}
 where the expression for the $q$-potential density of $X$ can be found in e.g. \cite[Cor. 8.9]{Kyprianou}. Since $W^{(q)}(x)=0$ for $x<0$, it follows that
 \begin{equation*}
 g(x)= \int_0^\infty \mathrm e^{\lambda y}  \frac{\mathrm e^{\Phi(q)(x-y)} }{\psi'(\Phi(q))}  \mathrm d y, \quad x\leq 0.
 \end{equation*}
 Therefore, noting that $X_{\tau_r}\leq 0$ on the event $\{\tau_r < \tau_b^+\}$, we have by an application of Theorem \ref{thm_lapl} with $\lambda=\Phi(q)$
 and since $\psi(\Phi(q))=q$,
\begin{equation*}
 \mathbb{E}_x \left[ \mathrm{e}^{-q\tau_r} g(X_{\tau_r}) \mathbf{1}_{\{\tau_r<\tau_b^+\}} \right]
 = \int_0^\infty \mathrm e^{\lambda y} \left(
  \frac{ \mathrm e^{\Phi(q)(x-y)} }{\psi'(\Phi(q))} - \frac{ \mathrm e^{\Phi(q)(b-y)} }{\psi'(\Phi(q))} \frac{\Lambda^{(q)}(x,r)}{\Lambda^{(q)}(b,r)} \right) \mathrm d y.
 \end{equation*}
 Using the same arguments that led to \eqref{resol_step1} and \eqref{resol_step2}, we get via \eqref{def_g},
 \begin{equation*}
 \begin{split}
\int_0^\infty \mathrm e^{\lambda y} \int_0^\infty \mathrm e^{-q t}  &  \mathbb P_x(X_t\in\mathrm d y,  t<\tau_r\wedge\tau_b^+) \mathrm d t \\
= & g(x) - g(b) \frac{\Lambda^{(q)}(x,r)}{\Lambda^{(q)}(b,r)} -  \mathbb{E}_x \left[ \mathrm{e}^{-q\tau_r} g(X_{\tau_r}) \mathbf{1}_{\{\tau_r<\tau_b^+\}} \right] \\
= & \int_0^\infty \mathrm e^{\lambda y} \left( \frac{\Lambda^{(q)}(x,r)}{\Lambda^{(q)}(b,r)} W^{(q)}(b-y) - W^{(q)}(x-y)  \right) \mathrm d y.
 \end{split}
 \end{equation*}
Proposition \ref{prop_resolpos} now follows by uniqueness of the Laplace transform.
\exit

\subsection{Proof of Corollaries \ref{cor_lapl_limb} and \ref{cor_laplresol_limb}}

 \paragraph{Proof of Corollary \ref{cor_lapl_limb}}
	Without loss of generality we assume that $q>0$ since the case $q=0$ can be dealt with by taking limits when $q\downarrow 0$ in combination with the monotone convergence theorem. The corollary follows from Theorem \ref{thm_lapl} by taking limits as $b\to\infty$ together with the monotone convergence theorem. Though we still need to identity the limit, as $b\to\infty$, of the right hand side of the identity in Theorem \ref{thm_lapl}. To this end, we note that by  \eqref{eq:scale2} and the dominated convergence theorem,
	\begin{equation}\label{lim_scaleLam}
	\begin{split}
	\lim_{b\to\infty} \frac{\Lambda^{(q)}(b,r)}{W^{(q)}(x)} = & \lim_{b\to\infty}  \int_0^\infty \mathrm e^{\Phi(q)z}\frac{W_{\Phi(q)}(b+z)}{ W_{\Phi(q)}(b)} \frac zr \mathbb P(X_r\in\mathrm d z) \\
	= & \int_0^\infty \mathrm e^{\Phi(q)z} \frac zr\mathbb P(X_r\in\mathrm d z).
	\end{split}
	\end{equation}
	Here the dominated convergence theorem is applicable because the assumption $q>0$ ensures that $\lim_{x\to\infty}W_{\Phi(q)}(x)<\infty$. The proof is then complete once we show that
	\begin{equation}\label{limit_ident}
	\lim_{b\to\infty}\frac{ \mathrm e^{\lambda b }  -  (\psi(\lambda)-q)  \int_0^b W^{(q)}(b-z)  \mathrm e^{\lambda z}  \mathrm d z }{ W^{(q)}(b) } = \frac{\psi(\lambda)-q }{ \lambda-\Phi(q)},
	\end{equation}
	where for $\lambda=\Phi(q)$ the right hand side is understood to be equal to $\lim_{\lambda\to\Phi(q)}\frac{\psi(\lambda)-q}{\lambda-\Phi(q)}=\psi'(\Phi(q))$.
	For this we split the analysis into three cases. First, assume $0\leq \lambda<\Phi(q)$. Then using \eqref{eq:scale2}, we get that the left hand side of \eqref{limit_ident} is given by,
	\begin{multline*}
	\lim_{b\to\infty}\frac{ \mathrm e^{\lambda b } }{ \mathrm e^{\Phi(q)b}W_{\Phi(q)}(b) } -  (\psi(\lambda)-q) \int_0^b  \frac{ \mathrm e^{\Phi(q)(b-z)}W_{\Phi(q)}(b-z) }{ \mathrm e^{\Phi(q)b}W_{\Phi(q)}(b) }   \mathrm e^{\lambda z}\mathrm d z \\
	= 0 -  (\psi(\lambda)-q)\int_0^\infty \mathrm e^{(\lambda-\Phi(q))z}\mathrm d z,
	\end{multline*}
	which equals the right hand side of \eqref{limit_ident}. Second, assume $\lambda>\Phi(q)$. Then
	\begin{equation*}
	\begin{split}
	\mathrm e^{\lambda b }  -  (\psi(\lambda)-q)  \int_0^b W^{(q)}(b-z)  \mathrm e^{\lambda z}  \mathrm d z
	= &	\mathrm e^{\lambda b } \left( 1 - (\psi(\lambda)-q)\int_0^b  \mathrm e^{-\lambda z} W^{(q)}(z)   \mathrm d z \right) \\
	= &	\mathrm e^{\lambda b } \int_b^\infty  \mathrm e^{-\lambda z} W^{(q)}(z)   \mathrm d z \\
	= & \int_0^\infty  \mathrm e^{-\lambda y} W^{(q)}(y+b)   \mathrm d y,
	\end{split}
	\end{equation*}
	where we used \eqref{eq:scale} for the second equality. Now \eqref{limit_ident} follows for the case $\lambda>\Phi(q)$ via \eqref{eq:scale2} and the dominated convergence theorem. Finally, when $\lambda=\Phi(q)$,   \eqref{limit_ident} follows easily by  \eqref{eq:scale2}.
 \exit

\paragraph{Proof of Corollary \ref{cor_laplresol_limb}}
As previously, we assume without loss of generality that $q>0$. The corollary follows from Theorem \ref{theo:main2} by taking limits as $b\to\infty$ together with the monotone convergence theorem. What is left is to determine the limit of the right hand side of the identity in Theorem \ref{theo:main2}.
By \eqref{lim_scaleLam} and \eqref{eq:scale2}, we have
\begin{equation*}
	\lim_{b\to\infty} \frac{\mathrm e^{\lambda b}}{\Lambda^{(q)}(b,r)} =
	\begin{cases}
	\infty & \text{if $\lambda>\Phi(q)$}, \\
	0 & \text{if $0\leq \lambda<\Phi(q)$}.
	\end{cases}
\end{equation*}	
Using this observation in combination with \eqref{lim_scaleLam} and \eqref{limit_ident}, we deduce the corollary in the case where $\lambda\neq \Phi(q)$.
Now assume  $\lambda= \Phi(q)$. Note that then the ratio $\frac{ 1 - \mathrm e^{-(\psi(\lambda)-q) r}  }{ \psi(\lambda)-q}$ is equal to $r$ as it is understood in the limiting sense as $\lambda\to\Phi(q)$ when $\lambda=\Phi(q)$. We have
\begin{equation*}
\begin{split}
\lim_{b\to\infty} \frac{ \mathrm e^{\Phi(q) b} r -   \int_0^b W^{(q)}(b-z)  \mathrm e^{\Phi(q) z}  \mathrm d z }{W^{(q)}(b)}
= \lim_{b\to\infty} \left( \frac{r}{W_{\Phi(q)}(b)} - \int_0^b \frac{ W_{\Phi(q)}(b-z) }{ W_{\Phi(q)}(b) }\mathrm d z \right)
\end{split}
\end{equation*}
and this equals $-\infty$ since given $q>0$, $W_{\Phi(q)}$ is a strictly increasing function with $\lim_{x\to\infty} W_{\Phi(q)}(x)\in(0,\infty)$. In combination with \eqref{lim_scaleLam}, this shows that the left hand side of the identity in Corollary \ref{cor_laplresol_limb} equals $+\infty$ in the case where $\lambda=\Phi(q)$.
\exit

\section*{Acknowledgement}
Part of the work was completed while Z. Palmowski was visiting the University of Manchester. R. Loeffen and Z. Palmowski are grateful to the LMS for supporting this visit by an LMS visitors grant (reference: 21610).
B.A. Surya acknowledges the support and hospitality provided by the
Center for Applied Probability of Columbia University in the City of
New York during his academic visit in May 2014 in which he started
working on the problem.
He also appreciates inputs from participants of the 2016 Wellington Workshop on Probability and Mathematical Statistics as well as ANU College of Business and Economics Seminar Series at parts of this research were presented.
This research is financially supported by Victoria University PBRF Research Grants \# 212885 and \# 214168 for which Budhi Surya would like to acknowledge.
This work is partially supported by the National Science Centre under the grant
 2016/23/B/HS4/00566 (2017-2020).


\begin{thebibliography}{21}

\bibitem{AIZ}
Albrecher, H., Ivanovs, J. and Zhou, X. (2016). Exit identities for L\'vy
processes observed at Poisson arrival times. \textit{Bernoulli} \textbf{22(3)}, 1364--1382.


\bibitem{Baurdoux} Baurdoux, E.J., Pardo, J.C., Perez, J.L. and Renaud, J.-F. (2016).
Gerber-Shiu functionals at Parisian ruin for L\'evy insurance risk
processes. \textit{Journal of Applied Probability}  \textbf{53(2)}, 572--584.

\bibitem{Bertoin} Bertoin, J. (1996). \textit{L\'evy Processes. Cambridge Tracts in
Mathematics} \textbf{121}. Cambridge: Cambridge Univ. Press.


\bibitem{Broadie} Broadie, M., Chernov, M. and Sundaresan, S.
(2007). Optimal debt and equity values in the presence of Chapter 7
and Chapter 11. \textit{J. Finance} \textbf{LXII(3)}, 1341-1377.


\bibitem{Chesney} Chesney, M., Jeanblanc-Picqu\'e, M. and Yor, M. (1997). Brownian excursions
and Parisian barrier options. \textit{Adv. Appl. Probab.} \textbf{29}, 165-184.


\bibitem{Czarna} Czarna, I. and Palmowski, Z. (2011). Ruin
probability with Parisian delay for a spectrally negative L\'evy
process. \textit{J. Appl. Probab.} \textbf{48}, 984-1002.



\bibitem{Dassios2009a} Dassios, A. and Wu, S. (2009). Semi-Markov model for
excursions and occupation time of Markov processes. Working paper,
LSE London. Available at http://stats.lse.ac.uk/angelos.

\bibitem{Dassios2009b} Dassios, A. and Wu, S. (2009). Parisian ruin with
exponential claims. Working paper, LSE London. Available at
http://stats.lse.ac.uk/angelos.


\bibitem{Francois} Francois, P. and Morellec, E. (2004).
Capital structure and asset prices: Some effects of bankruptcy
procedures. \textit{J. Business} \textbf{77}, 387-411.

\bibitem{Kuznetzov} Kusnetzov, A., Kyprianou, A.E. and Rivero, V.
(2013). \textit{The Theory of Scale Functions for Spectrally
Negative L\'evy Processes}, L\'evy Matters II, Springer Lecture
Notes in Mathematics.


\bibitem{Kyprianou} Kyprianou, A.E. (2014). \textit{Introductory
Lectures on Fluctuations of L\'evy Processes with Applications.
Universitext.} Berlin: Springer.


\bibitem{Landriault} Landriault, D., Renaud, J-F. and Zhou, X. (2014).
An insurance risk model with Parisian implementation delays.
\textit{Methodol. Comput. Appl. Probab} \textbf{16(3)}, 583-607.

\bibitem{CzarnaRenaud}
Lkabous, M.A., Czarna, I. and Renaud, J-F. (2016).
Parisian ruin for a refracted L\'evy process.
\textit{Insurance: Mathematics and Economics}, see https://arxiv.org/abs/1603.09324.

\bibitem{Loeffen} Loeffen, R., Czarna, I. and Palmowski, Z. (2013). Parisian ruin probability for spectrally
negative L\'evy processes \textit{Bernoulli}, \textbf{19(2)}, 599-609.

\bibitem{ronnie}
Loeffen, R. (2014).
On obtaining simple identities for overshoots of spectrally negative L\'evy processes.
Submitted for publication, see http://arxiv.org/abs/1410.5341.


\bibitem{Sato}
Sato, K. (1999). {\it L\'evy processes
and infinitely divisible distributions}. Cambridge University Press.

\bibitem{Surya} Surya, B.A. (2008). Evaluating scale function of spectrally
negative L\'evy processes. \textit{J. Appl. Probab.}
\textbf{45}, 135-149.



\end{thebibliography}
\end{document}